\newcommand{\nocontentsline}[3]{}
\newcommand{\tocless}[2]{\bgroup\let\addcontentsline=\nocontentsline#1{#2}\egroup}
\newtheorem{theorem}{Theorem}
\theoremstyle{plain}
\newtheorem{corollary}{Corollary}
\newtheorem{lemma}{Lemma}
\newtheorem{proposition}{Proposition}
\numberwithin{equation}{section}
\theoremstyle{definition}
\newtheorem{definition}{Definition}
\newtheorem*{acknowledgement}{Acknowledgements}
\theoremstyle{remark}
\newtheorem*{rem}{Remark}
\newtheorem*{note}{Note}
\newcommand{\CSD}{\mathcal{L}}
\newcommand{\quadr}[1]{\rho^{-1} \left( #1 #1^* \right)_0}
\newcommand{\cindex}[1]{\ensuremath{\mathcal{I}(#1)} }
\newcommand{\Inv}[1]{\ensuremath{\operatorname{Inv}(#1)}}
\newcommand{\Inter}[1]{\ensuremath{\operatorname{Int}(#1)}}
\newcommand{\Image}[1]{\ensuremath{\operatorname{Im}(#1)}}
\newcommand{\Ker}[1]{\ensuremath{\operatorname{Ker}(#1)}}
\newcommand{\Coker}[1]{\ensuremath{\operatorname{Coker}(#1)}}
\DeclareFontFamily{U}{mathx}{\hyphenchar\font45}
\DeclareFontShape{U}{mathx}{m}{n}{
      <5> <6> <7> <8> <9> <10>
      <10.95> <12> <14.4> <17.28> <20.74> <24.88>
      mathx10
      }{}
\DeclareSymbolFont{mathx}{U}{mathx}{m}{n}
\DeclareMathAccent{\widecheck}{0}{mathx}{"71}
\DeclareMathAccent{\wideparen}{0}{mathx}{"75}
\DeclareMathAccent{\widebar}{0}{mathx}{"73}
\begin{document}

\title{A new gauge slice for the relative Bauer-Furuta invariants}     
\author{Tirasan Khandhawit}              
\date{} 
\address{Kavli Institute for the Physics and Mathematics of the Universe (WPI), Todai Institutes for Advanced
Study, The University of Tokyo, 5-1-5 Kashiwa-No-Ha, Kashiwa, Chiba 277-8583, Japan}
\email{tirasan.khandhawit@ipmu.jp}
\urladdr{} 

\begin{abstract}
In this paper, we study Manolescu's construction of the relative Bauer-Furuta invariants arising from the Seiberg-Witten equations on 4-manifolds with boundary. The main goal of this paper is to introduce a new gauge fixing condition in order to apply the finite dimensional approximation technique. We also hope to provide a framework to extend Manolescu's construction to general 4-manifolds.
\end{abstract}

\maketitle

\pagestyle{plain}

\section{Introduction}

Stable homotopy invariants arising from gauge theory have provided many interesting results in low-dimensional topology.
One of the first examples is Furuta's $10/8$-theorem, which provides constraints on intersection forms of smooth 4-manifolds \cite{Furu}. Later, Bauer and Furuta constructed an invariant for a closed 4-manifold as an element in a certain stable cohomotopy group  (\cite{BFII}, \cite{BFI}).

The basic idea of this construction is to consider the Seiberg-Witten map, rather than its moduli space of solutions, and then consider approximated maps between finite dimensional vector spaces  to obtain a stable map between spheres. One useful observation for this construction is that the Seiberg-Witten map can be written as a sum of linear and compact operators. 

In 2003, Manolescu constructed a Floer spectrum for a rational homology 3-sphere \cite{Man1}. Roughly speaking, the construction comes from finite dimensional approximation of the Seiberg-Witten flow on an infinite-dimensional space. This allows one to extend the notion of Bauer-Furuta invariants to 4-manifolds with a rational homology sphere as a boundary. 

Let $X$ be a smooth, compact, connected, oriented 4-manifold with boundary \(\partial X = Y\) and equip $X$ with a spin$^c$ structure whose restriction induces a spin$^c$ structure on $Y$. Conceptually, one can view the construction of the relative Bauer-Furuta invariant as a combination of finite dimensional approximation on both \(X\)  and  \(Y\) using  the Seiberg-Witten map and the restriction map 
\begin{align*}\mathcal{M}(X) \rightarrow \mathcal{B}(Y)\end{align*}
from the moduli space of Seiberg-Witten solutions of $X$ to the quotient configuration space of $Y$ as a boundary term. An important step is to instead consider spaces of configurations with certain gauge fixing condition so that we have a map with Fredholm property between vector spaces. 

The main purpose of this paper is to introduce a new gauge fixing for a 4-manifold with boundary. An advantage of our gauge fixing condition, called the double Coulomb condition, is that the restriction map from the corresponding slice on \(X\) to the Coulomb slice on \(Y\) is linear. In contrast, the restriction map from the previously used Coulomb-Neumann slice on \(X\) to the Coulomb slice on \(Y\) is not linear and its nonlinear part is not compact. In fact, the boundary condition of our double Coulomb condition is motivated by this situation.

In Section~\ref{sec prelim}, we give a definition of the double Coulomb condition and prove its basic properties. In Section~\ref{sec fredholm}, we show that
the double Coulomb slice has several properties analogous to the Coulomb-Neumann slice. In Section~\ref{sec main}, we apply finite dimensional approximation to this slice. At the end, we specialize to the case when $b_1(Y) = 0$ and reproduce Manolescu's construction of the relative Bauer-Furuta invariant, denoted by \(SWF(X)\). \begin{theorem}
When \(b_1 (Y) = 0\), the Seiberg-Witten map with boundary term on the double Coulomb slice gives an $S^1$-equivariant stable homotopy class of maps
\begin{align}
\mathit{SWF}(X) : \Sigma^{-b^+ (X)} Th_{Dir} (X) \rightarrow \mathit{SWF}(Y), 
\label{eq mainth}
\end{align}
where $\mathit{SWF}(Y)$ is the Floer spectrum associated to \(Y\) and $Th_{Dir} (X)$ is the Thom spectrum associated to a family of Dirac operator on $X$ parametrized by the Picard torus of \(X\).  In the special case when $b_1 (X) = 0$, we have
\begin{align}
\mathit{SWF}(X) : \mathbf{S}^{-b^+ (X)\mathbb{R} - \frac{\sigma(X)}{8}\mathbb{C}}  \rightarrow \mathit{SWF}(Y), 
\label{eq mainb1=0}
\end{align}
where \(\mathbf{S}\) is the sphere spectrum.
\label{thm b1=0}
\end{theorem}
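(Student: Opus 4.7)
The plan is to imitate Manolescu's construction in \cite{Man1}, but with the double Coulomb slice replacing the Coulomb-Neumann slice, and to combine finite dimensional approximation on $X$ with that on $Y$. By Section~\ref{sec fredholm}, the Seiberg-Witten map on the double Coulomb slice decomposes as $L+C$, where $L$ is a linear Fredholm operator (the sum of $d^+$ and the Dirac operator with the appropriate boundary behavior) and $C$ is a compact quadratic nonlinearity; crucially, since the restriction map from the double Coulomb slice on $X$ to the Coulomb slice on $Y$ is linear, the boundary term enters through $L$ rather than through the nonlinearity, so that $C$ remains compact.

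First I would identify the virtual index of $L$. Its cokernel contains a factor $\mathbb{R}^{b^+(X)}$ coming from the $d^+$-part, which accounts for the desuspension $\Sigma^{-b^+(X)}$. For the Dirac part, one considers the family of twisted Dirac operators parametrized by the Picard torus $\operatorname{Pic}(X)$ of flat spin$^c$ connections on $X$; the virtual index bundle of this family gives rise to the Thom spectrum $Th_{Dir}(X)$. Putting these together, the formal domain associated to the linearization is $\Sigma^{-b^+(X)} Th_{Dir}(X)$.

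Next I would perform the finite dimensional approximation, choosing an exhausting sequence of finite dimensional subspaces of the double Coulomb slice on $X$ and, on the $Y$-side, using the spectral cutoffs of the extended Hessian that define $\mathit{SWF}(Y)$. For each level of approximation, I would produce an $S^1$-equivariant map from a large closed ball (with its boundary sphere serving as the basepoint) into the pre-Thom space of the isolating neighborhood used in the Conley index construction of $\mathit{SWF}(Y)$. The hard part will be verifying that the approximated map sends such a ball inside this isolating neighborhood: one needs uniform a priori bounds showing that any configuration on $X$ whose double Coulomb Seiberg-Witten image is small restricts to a boundary configuration lying well inside the isolating neighborhood on $Y$. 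These bounds combine the standard Seiberg-Witten energy estimates with the properties of the double Coulomb slice established in Section~\ref{sec fredholm}.

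Finally, I would verify invariance of the resulting stable homotopy class under the auxiliary choices (metric, perturbation, exhausting sequence, spectral parameter) by a standard continuation argument using homotopies of Fredholm maps together with the deformation invariance of the Conley index. In the special case $b_1(X) = 0$, the Picard torus is a point and $Th_{Dir}(X)$ collapses to the one-point compactification of the virtual Dirac index, whose complex rank computes to $-\sigma(X)/8$ by the index theorem, yielding the domain $\mathbf{S}^{-b^+(X)\mathbb{R} - (\sigma(X)/8)\mathbb{C}}$ as in~\eqref{eq mainb1=0}.
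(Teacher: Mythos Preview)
Your outline follows the paper's route, but your description of the ``hard part'' misidentifies what the a priori bounds must control. It is not enough that the restriction of an approximate solution on $X$ lies well inside the isolating neighborhood $\mathcal{R}$ on $Y$; to obtain a map into the Conley index one must verify the hypotheses of Lemma~\ref{relCon}, namely that no point of $f(S(R,U_n))$ lies in the forward-invariant set $A^+(\mathcal{R}\cap W_n)$ of the \emph{compressed} flow, and that forward orbits through $f(B(R,U_n))\cap A^+$ avoid $\partial\mathcal{R}$. These conditions concern half-trajectories of the approximated flow on $W_n$, not merely the single restriction point $r(x)$.

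The paper therefore argues by contradiction: it pairs each approximate 4-dimensional solution $x_n$ with an approximate half-trajectory $y_n:[0,\infty)\to W_n$ starting at $\Pi_n^- r(x_n)$, and proves a convergence lemma (Lemma~\ref{lemma 4dhalflimit}) showing that, after a subsequence, $(x_n,y_n)$ converges to a genuine finite-energy $X$-trajectory. The substantive analytic work is upgrading the convergence $y_n(0)\to y(0)$ from $L^2_{k-1/2}$ to $L^2_{k+1/2}$ via a parabolic-type estimate using $e^{tD}\Pi_0^-$, which then feeds into the elliptic estimate~\eqref{eq ellipest} to get $x_n\to x$ in $L^2_{k+1}$. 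Only then does one invoke the Kronheimer--Mrowka compactness for $X$-trajectories (Corollary~\ref{cor universalb}) to reach a contradiction with $\|x\|=R$ or $y(t_0)\in\partial\mathcal{R}$. Your sketch jumps directly from ``Seiberg--Witten energy estimates'' to the desired containment, omitting this approximate-to-actual bridge; without it the argument does not close.
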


In particular, when \(X\) is a cobordism between two 3-manifolds, one can use duality and reinterpret the relative Bauer-Furuta invariant as a morphism between Floer spectra.

\begin{corollary} Suppose that $\partial X = \widebar{Y_1} \coprod Y_2$ and $b_1 (X) = b_1(Y_1) = b_1 (Y_2) = 0$, then we also have an $S^1$-equivariant stable homotopy class of maps
\begin{align}
\mathit{SWF}(X) : \mathit{SWF}(Y_1)  \rightarrow \mathbf{S}^{b^+ (X)\mathbb{R} + \frac{\sigma(X)}{8}\mathbb{C}} \wedge \mathit{SWF}(Y_2). \nonumber
\end{align}
\label{cor cobord}
\end{corollary}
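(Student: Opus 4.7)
The plan is to derive the corollary from Theorem~\ref{thm b1=0} using two standard structural properties of the $b_1 = 0$ Floer spectrum established by Manolescu: that $\mathit{SWF}$ converts disjoint union into smash product, and that orientation reversal corresponds to $S^1$-equivariant Spanier--Whitehead duality.

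First, I would apply the second part of Theorem~\ref{thm b1=0} to $X$ viewed as a cobordism with $\partial X = \widebar{Y_1} \coprod Y_2$. Since $b_1(X) = b_1(Y_1) = b_1(Y_2) = 0$, the hypotheses are met and the theorem produces an $S^1$-equivariant stable map
\[
\mathit{SWF}(X) : \mathbf{S}^{-b^+(X)\mathbb{R} - \frac{\sigma(X)}{8}\mathbb{C}} \rightarrow \mathit{SWF}(\widebar{Y_1} \coprod Y_2).
\]
The multiplicativity $\mathit{SWF}(\widebar{Y_1} \coprod Y_2) \simeq \mathit{SWF}(\widebar{Y_1}) \wedge \mathit{SWF}(Y_2)$ follows because both the Coulomb slice and the Seiberg--Witten flow decouple across boundary components, so the Conley index computed on a product \isoln is the smash product of the individual Conley indices.

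Next, I would invoke the $S^1$-equivariant duality $\mathit{SWF}(\widebar{Y_1}) \simeq \mathit{SWF}(Y_1)^\vee$ to rewrite the map as
\[
\mathbf{S}^{-b^+(X)\mathbb{R} - \frac{\sigma(X)}{8}\mathbb{C}} \rightarrow \mathit{SWF}(Y_1)^\vee \wedge \mathit{SWF}(Y_2),
\]
and then transpose it across the smash--hom adjunction. This yields the desired morphism
\[
\mathit{SWF}(Y_1) \rightarrow \mathbf{S}^{b^+(X)\mathbb{R} + \frac{\sigma(X)}{8}\mathbb{C}} \wedge \mathit{SWF}(Y_2).
\]

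The main obstacle is simply ensuring that the multiplicativity and duality properties hold with the correct grading shifts in the framework of the double Coulomb slice introduced in this paper, rather than in Manolescu's original Coulomb--Neumann setup. Multiplicativity is essentially automatic because the double Coulomb condition is local in the boundary and so splits over $\widebar{Y_1} \coprod Y_2$. Duality then reduces to the compatibility of the present construction with Manolescu's, which is precisely the content of Theorem~\ref{thm b1=0} recovering the same $S^1$-equivariant stable homotopy class; no further analytic input is required.
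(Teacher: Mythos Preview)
Your proposal is correct and matches the paper's approach: the paper does not give a detailed proof of this corollary, only the one-sentence remark preceding it that ``one can use duality and reinterpret the relative Bauer-Furuta invariant as a morphism between Floer spectra.'' Your argument---applying Theorem~\ref{thm b1=0}, splitting $\mathit{SWF}(\widebar{Y_1}\coprod Y_2)$ as a smash product, invoking the Spanier--Whitehead duality $\mathit{SWF}(\widebar{Y_1})\simeq \mathit{SWF}(Y_1)^\vee$, and transposing via the smash--hom adjunction---is precisely the intended unpacking of that remark.
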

We point out that the construction can be applied directly to give a stable homotopy class of $\mathit{Pin}(2)$-equivariant maps when all manifolds are spin. This \(Pin(2)\)-version of   Corollary~\ref{cor cobord} plays a crucial role in the recent applications of $\mathit{Pin}(2)$-equivariant stable homotopy invariants to low-dimensional topology \cite{Lin14,Man13-2,Man13-1}.

Another goal of the paper is to provide a framework to extend Manolescu's construction to a 4-manifold whose boundary can be any 3-manifold. The case $b_1 (Y) = 1$ was studied by Kronheimer and Manolescu in \cite{ManK}. We also hope that this new slice will help prove other important properties of  the relative Bauer-Furuta invariant.

In Appendix~\ref{App con}, we provide some background in Conley index theory. We also include the result regarding independence of index pairs in the construction (see Proposition~\ref{prop maptocon}), which, we believe, has not appeared before. 

\begin{acknowledgement}  This was part of the author's Ph.D. thesis at Massachusetts Institute of Technology. The author would like to gratefully thank Tom Mrowka for advising and support during the graduate study. The author would also like to thank Mikio Furuta and Ciprian Manolescu for several helpful discussions. This work was supported by World Premier International Research Center Initiative (WPI), MEXT, Japan.
\end{acknowledgement}

\section{Preliminaries: The double Coulomb condition}
\label{sec prelim}

In this section, let $M$ be a compact, connected, oriented, Riemannian manifold with boundary $\partial M  = \coprod N_i$. We will describe a variant of the the Hodge decomposition of 1-forms in order to set up an appropriate slice for finite dimensional approximation of the Seiberg-Witten map.

The inclusion $\partial M \rightarrow M$ gives a decomposition of a differential form of $M$ at the boundary to tangential part and normal part
\begin{align}
\omega_{| \partial M} = \mathbf{t}\omega + \mathbf{n}\omega . \nonumber
 \end{align} 

Thus, $\mathbf{t} \omega$ is the restriction of $\omega$ to the boundary. When $\partial M $ has more than one connected components, we also denote by $\mathbf{t}_i$  the restriction to the $i$-th component.

Let $\star$ be the Hodge star and $d^*$ be the codifferential. With this notation, we recall the formula for integration by parts (namely, the Green's formula)
\begin{align}
\int_M \left\langle d \omega , \eta \right\rangle = \int_M \left\langle  \omega , d^* \eta \right\rangle + \int_{\partial M} \mathbf{t} \omega \wedge \star \mathbf{n}\eta \label{eq green}
\end{align}
and the identity $\star (\mathbf{n} \omega) = \mathbf{t} (\star \omega) $.

We now introduce a space of 1-forms with double Coulomb condition.
\begin{definition} We say that a 1-form $\alpha$ satisfies the double Coulomb condition if
\begin{enumerate}
\item $\alpha$ is coclosed ($d^* \alpha = 0$).
\item its restriction to the boundary is coclosed, i.e. $d^* ( \mathbf{t} \alpha ) = 0$ on $\partial M$.
\item For each $i$, the integral $\int_{N_i} \mathbf{t}_i (\star \alpha) = 0$. 
\end{enumerate}
Denote by $\Omega^1_{\mathit{CC}} (M)$ the space of all 1-forms satisfying the double Coulomb condition. \label{def 1-bC}
\end{definition}

When the metric of \(M\) is cylindrical near the boundary, i.e. the neighborhood of the boundary is isometric to \((-\epsilon,0] \times \partial M\), one can decompose a 1-form $\omega$ in the collar neighborhood as
\begin{align}
        \omega = \alpha(t) + \beta(t) + \gamma(t) dt , 
\label{eq 1formcylin}        
\end{align}
where $\alpha(t) , \beta(t) ,$ and $\gamma(t)$ is an exact 1-form, a coclosed 1-form, and a 0-form on $\partial M$ respectively and each of them is time dependent.
One can see that the Neumann condition $\mathbf{n}\omega =0$ simply means $\gamma(0) = 0$, while the  condition $d^* ( \mathbf{t}\omega ) = 0$ means $\alpha(0) = 0$ and the last condition in Definition~\ref{def 1-bC} is a condition on the integral of \(\gamma(0) \).

Hodge theory on manifolds with boundary has been studied by many authors (cf. \cite{HodgeB,GSchwarz}). However, the double Coulomb condition and the following decomposition appear to be new. 

\begin{proposition} Any 1-form $\alpha$ can be written uniquely as a sum $\alpha = \omega + d \xi$ where $\omega \in \Omega^1_\mathit{CC} (M)$ and $\xi$ is a 0-form. In other words, there is an isomorphism
\begin{align}
   \Omega^1 (M) \cong \Omega^1_\mathit{CC} (M) \oplus d \Omega^0 (M) \label{eq CCdecom}
   \end{align}   
   \label{prop 1-decomp}
\end{proposition}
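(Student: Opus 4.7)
The plan is to verify uniqueness and existence separately, both by reducing to scalar boundary-value problems for the Laplacian.

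For uniqueness, I would show that if $d\xi \in \Omega^1_{\mathit{CC}}(M)$ then $d\xi = 0$. Condition (i) of Definition~\ref{def 1-bC} gives $\Delta \xi = d^*d\xi = 0$, so $\xi$ is harmonic on $M$. Condition (ii), combined with $\mathbf{t}(d\xi) = d_{N_i}(\xi|_{N_i})$, forces $\xi|_{N_i}$ to be harmonic on the closed manifold $N_i$, hence equal to a constant $c_i$ on each component. Setting $\omega = \xi$ and $\eta = d\xi$ in Green's formula \eqref{eq green} and using the identity $\star \mathbf{n}\eta = \mathbf{t}(\star \eta)$,
\begin{align*}
\int_M |d\xi|^2 \;=\; \int_M \xi\,\Delta\xi \;+\; \sum_i c_i \int_{N_i} \mathbf{t}_i(\star d\xi) \;=\; 0,
\end{align*}
since the first term vanishes by harmonicity and the second by condition (iii). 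Hence $d\xi = 0$.

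For existence, given $\alpha$ I would look for $\xi$ so that $\omega := \alpha - d\xi$ satisfies the three conditions, which unpack to (a) $\Delta \xi = d^*\alpha$ on $M$, (b) $\Delta_{N_i}(\xi|_{N_i}) = d^*_{N_i}(\mathbf{t}_i \alpha)$ on each $N_i$, and (c) $\int_{N_i}\mathbf{t}_i(\star d\xi) = \int_{N_i}\mathbf{t}_i(\star\alpha)$. First I would solve (b) on each closed manifold $N_i$; the equation is solvable because the right-hand side has vanishing integral by Stokes, and its solution $\psi_i$ is unique modulo an additive constant $c_i$. Next I would solve the Dirichlet problem (a) on $M$ with boundary data $\psi_i + c_i$; writing the solution as $\xi = \xi_0 + \sum_i c_i u_i$, where $\xi_0$ solves (a) with data $\psi_i$ and $u_i$ is the harmonic function on $M$ with Dirichlet data $\delta_{ij}$ on $N_j$, the residual condition (c) reduces to a finite linear system $A\mathbf{c} = \mathbf{h} - \mathbf{b}$ in $\mathbb{R}^k$, with $A_{ji} = \int_{N_j}\mathbf{t}_j(\star d u_i)$.

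The main obstacle is the solvability of this last system. Green's second identity applied to $(u_i, u_j)$ shows $A$ is symmetric, and the uniqueness argument applied to $\sum_i c_i u_i$ identifies $\ker A$ with the line of constant vectors; hence $\operatorname{im} A = \{\mathbf{v} : \sum_j v_j = 0\}$. The required compatibility $\sum_j h_j = \sum_j b_j$ comes from Stokes' theorem together with the pointwise identity $d\star \eta = -(d^*\eta)\,\mathrm{vol}_M$ valid for any 1-form $\eta$: both sides equal $-\int_M d^*\alpha$. Thus the system is solvable, uniquely up to the addition of a global constant to $\xi$, which is precisely the indeterminacy that does not affect $d\xi$; this completes existence and therefore the direct-sum decomposition \eqref{eq CCdecom}.
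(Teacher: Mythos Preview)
Your proposal is correct and follows essentially the same approach as the paper's proof. The paper also solves the Dirichlet problem to achieve (i)--(ii), then adjusts by a harmonic function with locally constant boundary values to achieve (iii); your functions $u_i$ form a basis of the paper's space $K$, your matrix $A$ is the paper's map $ev$ written in coordinates, and your uniqueness computation $\int_M |d\xi|^2 = \sum_i c_i \int_{N_i}\mathbf{t}_i(\star d\xi)$ is exactly the Green's-formula step the paper uses to identify $\ker(ev)$ with the constants.
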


\begin{rem} This decomposition is not orthogonal as opposed to the standard Hodge decomposition $\Omega^1 (M) \cong \Omega^1_\mathit{CN} (M) \oplus d \Omega^0 (M)$, where $\Omega^1_\mathit{CN} (M)$ is a space of coclosed 1-form with vanishing normal component ($\mathbf{n} \alpha = 0$). If the condition $\int_{N_i} \mathbf{t}_i (\star \alpha) = 0$ is omitted, a decomposition exists but not unique with ambiguity of dimension $b_0 (\partial M) - 1$.
\end{rem}

\begin{proof}
Let $\alpha$ be a 1-form. We will first find a function $\xi_1$ such that $\alpha - d\xi_1$ satisfies (i) and (ii) of Definition~\ref{def 1-bC}, that is
\begin{equation}
\begin{aligned}
d^* ( d \xi_1) &= d^* \alpha, \\
d^* ( \mathbf{t} d\xi_1 ) &= d^* ( \mathbf{t} \alpha ).
\end{aligned} \label{eq hodge1}
\end{equation}
Since $d$ and $\mathbf{t}$ commute, we see that $d^* ( \mathbf{t} d\xi_1 ) = \Delta \mathbf{t} \xi_1 $. We can instead consider 
\begin{equation*}
\begin{aligned}
\Delta \xi_1 &= d^* \alpha, \\
\mathbf{t} \xi_1  &= G d^* ( \mathbf{t} \alpha ),
\end{aligned} 
\end{equation*}
where $G$ is a Green's operator of the Laplacian of the boundary $\partial M$. This equation is precisely the Dirichlet problem for the Poisson equation and is uniquely solvable (cf.~\cite{GSchwarz}). As a result, one can always find such $\xi_1$ as claimed.

Next, consider a function $\xi$ which satisfies
\begin{equation}
\begin{aligned}
\Delta \xi &= 0, \\
\mathbf{t}_i \xi  &= c_i ,
\end{aligned} 
\label{eq hodgeker}
\end{equation}
where $c_i$ are constants. This is a homogeneous solution of (\ref{eq hodge1}). Since this equation is also the Dirichlet problem, there is a unique solution $\xi$ for each vector $c =(c_1 , \ldots , c_{b_0 (\partial M)})$. 

Denote by $K$  the space of functions $\xi $ satisfying (\ref{eq hodgeker}) for all possible \(c\).    
This is a vector space of dimension $b_0 (\partial M) $ and $\xi$ is a constant function when the $c_i$'s are all equal. Let us consider a map
$ ev : K \rightarrow \mathbb{R}^{b_0 (\partial M)}$ by assigning the value
$ \int_{N_i} \mathbf{t}_i (\star d \xi) $ to the $i$-th component. When $\eta = d \xi$ and $\omega$ is a nonzero constant function, the Green's formula (\ref{eq green})  implies that
\begin{align}
0 = \int_M \Delta \xi + \sum \int_{N_i} \mathbf{t}_i (\star d \xi). \nonumber
\end{align}
Then, we see that the image of $ev$ is in the hyperplane $H_0 := \left\lbrace (r_i) \in \mathbb{R}^{b_0 (\partial M)}  \: | \: \sum r_i = 0 \right\rbrace $. By plugging $(\omega , \eta ) = (\xi , d\xi)$ in the Green's formula, we find that the kernel of $ev$ consists of the constant functions. Thus, the map $ev$ gives an isomorphism between 
\( K_0 :=
\left\lbrace \xi \in K \: | \: \sum c_i = 0 \right\rbrace \) and the hyperplane \( H_0 \).

Finally, we notice that any coclosed 1-form \(\omega\) satisfies
$ 0 = \sum \int_{N_i} \mathbf{t}_i (\star \omega) $ by pairing $\omega$ with a nonzero constant function in the Green's formula. Note that this makes the  condition (iii) trivial when $\partial M$ has just one component.
 From the previous paragraph, we can now pick $\xi_2 \in K_0$ so that $\int_{N_i} \mathbf{t}_i (\star (\alpha - d \xi_1 - d \xi_2)) = 0$. Hence \(\alpha - d \xi_1 - d \xi_2 \in \Omega^1_\mathit{CC} (M) \). The uniqueness of the decomposition follows from the fact that the kernel of $ev$ consists of the constant functions.

\end{proof}

\begin{note}
For a coclosed 1-form \(\alpha\), the condition $\int_{N_i} \mathbf{t}_i (\star \alpha) = 0$ is equivalent to $\alpha$ being orthogonal to $d \xi$ for all $\xi \in K$. Indeed, we have
\[ \int_M \left\langle d \xi , \alpha \right\rangle = \sum c_i \int_{N_i} \mathbf{t}_i (\star \alpha). \]
\end{note}


\section{The Seiberg-Witten map with boundary terms}
\label{sec fredholm}
From now on, let $X$ be a compact, connected, oriented, Riemannian 4-manifold with nonempty boundary $\partial X =Y$. 
We choose a metric so that a neighborhood of the boundary is isometric to the cylinder $I \times Y$ for some interval $I = (-C ,0]$.
Let $\mathfrak{s}_X$ be a spin$^c$ structure on $X$ and $\mathfrak{s}$ be the induced spin$^c$ structure on $Y$.
Denote by $S_X = S^+ \oplus S^-$ the spinor bundle of $X$ and by $S$  the spinor bundle of $Y$. 

Denote by \(\mathcal{A}_X\)  the space of spin$^c$ connection on \(S_X\) and by \(\Gamma(S^\pm)\)  the space of sections of the spinor bundles and by $\Omega_+^2 (X)$  the space of self-dual 2-forms. The Seiberg-Witten map is given by
\begin{align*}
\mathit{SW}: \mathcal{A}_X \oplus \Gamma(S^+) &\rightarrow i \Omega_+^2 (X) \oplus \Gamma(S^-) \\
(A,\Phi) &\mapsto (\frac{1}{2} F^+_{A^t} - \rho^{-1} ((\Phi \Phi^* )_0), \slashed{D}_{A}^+ \Phi), 
\end{align*}
where  \(F^+_{A^t}\) is the self-dual part of the curvature of the associated connection on the determinant bundle \(\Lambda^2 S^+ \),  \(\slashed{D}_{A}^+\) is the Dirac operator, \((\Phi \Phi^* )_0\) is the trace-free part of the endomorphism \(\Phi \Phi^*\), and \(\rho\) is the Clifford multiplication.    

The gauge group $\mathcal{G} := Map(X , S^1)$ acts on the above spaces so that $SW$ is  $\mathcal{G}$-equivariant. The action is given by $u \cdot A \mapsto A - u^{-1} d u$ on connections and by pointwise multiplication on  spinors whereas the action is trivial on 2-forms.
There is also the gauge subgroup  $\mathcal{G}^\bot := \left\{ e^\xi \; | \; \xi \in C^{\infty}(X; i\mathbb{R}) \text{ and } \int_X \xi = 0 \right\}$, which lies in the connected component of \(\mathcal{G}\).  

With a reference connection \(A_{0}\), the quotient of \(\mathcal{A}_X \oplus \Gamma(S^+)\) by \(\mathcal{G}^\bot\) can be identified with a global slice with the double Coulomb condition
\begin{align}
\mathit{Coul}^\mathit{CC}(X) =  \left\{ (a , \phi ) \in i \Omega^1 (X) \oplus \Gamma (S^+) \; | \; a \in\Omega^1_\mathit{CC} (X)    \right\}. \nonumber
\end{align}
This is a consequence of the decomposition (\ref{eq CCdecom}) from Proposition~\ref{prop 1-decomp}. The Seiberg-Witten map then becomes
\begin{align*}
SW: \mathit{Coul}^{CC}(X)  &\rightarrow  i \Omega_+^2 (X) \oplus \Gamma(S^-)  \\
(a,\phi) &\mapsto (d^+ a  - \rho^{-1} ((\phi \phi^* )_0)+ \frac{1}{2} F^+_{A_0^t} \, , \, \slashed{D}_{A_{0}}^+ \phi + \rho(a)\phi \, ) 
\end{align*} 
and we can write \(SW = \hat{D} + \hat Q\) where \(\hat D = (d^+ , \slashed{D}_{A_{0} }^+ )\) and \(\hat Q\) is the sum of a quadratic map and the constant term \(\frac{1}{2} F^+_{A_0^t}\).

On the 3-manifold side, we also have the Coulomb slice 
\begin{equation*}       \mathit{Coul}(Y) = \left\{ \left( b  , \psi  \right) \in i \Omega^1 (Y) \oplus \Gamma (S) \; | \; d^* b = 0 \right\}
\end{equation*}
which arises from the quotient of a configuration space by a gauge subgroup. For \(a \in \Omega^1_{CC} (X)\), its restriction to the boundary is already coclosed, so that the restriction  induces a map between the slices
\begin{align*}r : \mathit{Coul}^{CC}(X) \rightarrow \mathit{Coul}(Y). \end{align*}

This gives a Seiberg-Witten map with boundary terms
\begin{align*}
 SW \oplus r: \mathit{Coul}^{\mathit{CC}}(X)  &\rightarrow \left( i \Omega_+^2 (X) \oplus \Gamma(S^-) \right) \oplus \mathit{Coul}(Y).  
\end{align*}
As usual, we will extend the above maps to maps between appropriate Sobolev spaces. For a fixed real number\footnote{ This $k$ corresponds to a half-integer \(k+1/2 \) in \cite{Man1}.} \(k>3\), we consider the \(L^{2}_{k+1}\) completion of the domain of \(SW\), the \(L_k^2\) completion of the codomain of \(SW\), and the \(L^2_{k + 1/2}\) completion of \(Coul(Y)\) so that \(\hat Q\) is compact and the restriction map \(r\) is bounded. 　  

However, the linear part \(\hat{D} \oplus r\) is not Fredholm. To obtain a Fredholm map, we need to consider the above operator with spectral boundary condition as in the Atiyah-Patodi-Singer boundary-value problems. On the boundary 3-manifold \(Y\), we have a first-order self-adjoint elliptic operator \(D\) acting on the
Coulomb slice
 \begin{align}
        D : i \Ker{d^*} \oplus \Gamma (S) &\rightarrow i \Ker{d^*} \oplus \Gamma (S) \label{eq linearmap3d} \\ 
        (b,\psi ) &\mapsto (* d b  , \slashed{D}_{B_0} \psi ), \nonumber
\end{align}
where the connection \(B_0\) is the restriction of \(A_0\). Denote by $H_0^-$  its nonpositive eigenspace and by $\Pi_0^-$  the projection onto $H_0^-$. 
We now consider a map of the form
\begin{align}
\hat{D} \oplus (\Pi^-_0 \circ r) : \mathit{Coul}^{\mathit{CC}} (X) \rightarrow \left( i \Omega_+^2 (X) \oplus \Gamma(S^-) \right)  \oplus H^-_0 . \label{eq map-coul}
\end{align}
We will show that this map, extended to the Sobolev completion, is Fredholm with an elliptic estimate. The proof resembles that of Proposition~17.3.2 from \cite{Mono}.

\begin{proposition} The map $\hat{D} \oplus (\Pi^-_0 \circ r)$ in (\ref{eq map-coul}) is Fredholm and its index is equal to \(2\operatorname{Ind}_{\mathbb{C}} (\slashed{D}_{A_0}^+ ) +b_1(X) - b^+ (X) - b_1(Y) \). In addition, we have an estimate
\begin{align}
        \left\|  x \right\|_{L^2_{k+ 1}} \leq C \left( \left\|\hat{D} x \right\|_{L^2_{k}} + \left\| (\Pi^-_0 \circ r) x \right\|_{L^2_{k+ 1/2}} + \left\| x \right\|_{L^2} \right).
\label{eq ellipest}
\end{align}
\label{prop bFred}
\end{proposition}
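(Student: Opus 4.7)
The plan is to adapt the APS-style argument used for the Coulomb--Neumann slice (Proposition~17.3.2 of \cite{Mono}) to the double Coulomb setting. Since $\hat D\oplus(\Pi^-_0\circ r)$ is linear and the $3$-dimensional operator $D$ of \eqref{eq linearmap3d} splits into a form part and a spinor part, I analyze the two blocks separately and then add the indices.

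For the spinor block, $\hat D$ restricts to $\slashed D^+_{A_0}$ and the boundary condition is exactly the APS spectral projection with respect to $\slashed D_{B_0}$. Classical APS theory on a manifold with cylindrical collar delivers the Fredholm property, an elliptic estimate in the $L^2_{k+1}$--$L^2_k$ scale, and real index $2\operatorname{Ind}_{\mathbb C}(\slashed D^+_{A_0})$.

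For the form block, I use the cylinder decomposition \eqref{eq 1formcylin} of a $1$-form $a$ as $\alpha(t)+\beta(t)+\gamma(t)\,dt$ in a collar of $\partial X$. On the double Coulomb slice, coclosedness is global, the boundary clause $d^{*}(\mathbf{t}a)=0$ forces $\alpha(0)=0$, and the integral condition pins down $\gamma(0)$ up to the $(b_0(\partial X)-1)$-dimensional ambiguity noted in the remark following Proposition~\ref{prop 1-decomp}. In these coordinates, the operator $d^{+}$ restricted to the slice takes the translation-invariant form $\partial_t + L_1$ near the boundary, with $L_1$ acting as $*d$ on the Coulomb slice of $Y$, i.e.\ the form piece of \eqref{eq linearmap3d}. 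The spectral projection $\Pi^-_0\circ r$ then coincides with the APS boundary condition for $L_1$, and I invoke the APS elliptic package to obtain the estimate \eqref{eq ellipest} and the Fredholm property for this block.

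To compute the index, I would use homotopy invariance, deforming to a model problem on the cylinder where kernel and cokernel can be read off explicitly. The form-block kernel is identified with the space of closed coclosed $1$-forms on $X$ satisfying both the double Coulomb and the APS spectral conditions, yielding real dimension $b_1(X)$, while the cokernel splits into the $b^{+}(X)$-dimensional space of self-dual harmonic forms and a $b_1(Y)$-dimensional contribution coming from harmonic $1$-forms of $Y$ lying in the nonpositive eigenspace of $L_1$. The main obstacle, and the step requiring the most care, is the bookkeeping between the integral component in the definition of $\Omega^1_{\mathit{CC}}(X)$ and the zero-eigenspace of $L_1$: one must verify that these contributions exactly match so that the form block contributes precisely $b_1(X)-b^{+}(X)-b_1(Y)$ without any extra correction from $b_0$ terms.
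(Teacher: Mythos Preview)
Your spinor block is fine; the gap is in the form block.  You assert that $d^{+}$, restricted to the double Coulomb slice, has the translation-invariant APS form $\partial_t + L_1$ with $L_1 = *d$ acting on $\ker d^{*}\subset\Omega^1(Y)$, and that the projection $\Pi^{-}_0\circ r$ is simply the APS spectral condition for $L_1$.  Neither claim holds as stated.  In collar coordinates a $1$-form is $b(t)+c(t)\,dt$ with $b\in\Omega^1(Y)$, $c\in\Omega^0(Y)$, and $d^{+}$ involves the term $-dc$ as well as $\partial_t b$ and $*d_Y b$.  The double Coulomb slice imposes the \emph{global} differential constraint $d^{*}a=0$ (i.e.\ $\partial_t c = d^{*}_Y b$ throughout the collar), not a fibrewise condition, so the slice is not the space of sections of a bundle on which the APS machinery applies directly.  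Correspondingly, the full boundary data consists of $\mathbf t a$ and $\mathbf n a$ together, and the double Coulomb boundary clauses fix $\alpha(0)=0$ and the integrals of $\gamma(0)$; these are \emph{not} the spectral projection of $L_1$ but a different half-dimensional projection on $i\Omega^1(Y)\oplus i\Omega^0(Y)$.

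The paper resolves this by working instead with the genuinely elliptic extension $\widetilde D=(d^{+},\slashed D^{+}_{A_0},d^{*})$ on all of $i\Omega^1(X)\oplus\Gamma(S^{+})$, which \emph{does} have the APS form $\partial_t+\widetilde L$.  One then writes $\widetilde L=D\oplus L_1$ on $\mathit{Coul}(Y)\oplus(i\operatorname{Im}d\oplus i\Omega^0(Y))$ and proves the key algebraic fact: the nonpositive eigenspace $H_1^{-}$ of $L_1$ is complementary to $\{0\}\oplus\{f:\int_{Y_i}f=0\}$, so that the double-Coulomb boundary projection $\Pi_2$ (onto $i\operatorname{Im}d\oplus W$, $W$ the locally constant functions) has the same Fredholm effect as the spectral projection $\Pi_1^{-}$.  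Once this is established, an elementary observation lets one restrict to $\ker(d^{*}\oplus(\Pi_2\circ r))=\mathit{Coul}^{CC}(X)$ and read off both the Fredholm property and the elliptic estimate.  This complementarity of $H_1^{-}$ and $\ker\Pi_2$ is precisely the ``bookkeeping'' you flag in your last paragraph, and it is not a detail but the crux of the argument; without it you have no Fredholm property for the form block and no route to the index formula or the estimate.
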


\begin{proof} The main idea is to apply the Atiyah-Patodi-Singer boundary-value problem (cf.~\cite{APSI}) to the extended operator coming from the gauge fixing condition. Then, we will compare projections onto different semi-infinite subspaces in the boundary terms. One subspace arises from a spectral boundary condition of the extended operator while another is a sum of an eigenspace on \(\mathit{Coul}(Y)\) and a subspace characterizing the double Coulomb condition.

We will also make use of the following observation: suppose that an operator $\widetilde{D}$  arises from two operators $D_1 : V \rightarrow W_1$ and $D_2 : V \rightarrow W_2$, in the sense that 
\begin{align*}
        \widetilde{D}  = ( D_1 , D_2 ) : V \rightarrow W_1 \oplus W_2 .
\end{align*}
It is not hard to check that
\begin{equation}
\begin{aligned}
        \Ker{\widetilde{D} } &= \Ker{{D_1}|_{\Ker{D_2}}} = \Ker{{D_2}|_{\Ker{D_1}}}  \\
        \Coker{\widetilde{D} }  &= \Coker{D_1} \oplus \Coker{D_2|_{\Ker{D_1}}} \\ &= \Coker{D_1|_{\Ker{D_2}}} \oplus \Coker{D_2} .
\end{aligned}   
\label{eq sumFredholm}
\end{equation}
Consequently, the map $D_1|_{\Ker{D_2}} : \Ker{D_2}  \rightarrow W_1$ is Fredholm if \(\widetilde{D} \) is Fredholm. 

Let us start by considering an elliptic operator $\widetilde{D}$ given by
\begin{align*}
\widetilde{D} : i \Omega^1 (X) \oplus \Gamma (S^+) &\rightarrow i \Omega_+^2 (X) \oplus \Gamma(S^-) \oplus i \Omega^0 (X) \\
(a,\phi) &\mapsto (d^+ a , D_{A_0}^+ \phi , d^* a). 
\end{align*}
This is the map  $\hat D$ together with the summand $d^*$ for gauge fixing. Then, we write $\widetilde{D} = D_0 + K$, where $K$ extends to an operator of order $0$ and $D_0$ has the form
\begin{align*}
        D_0 = \frac{d}{dt} + \widetilde{L}, 
\end{align*}
in the collar neighborhood (up to isomorphisms) and the operator $\widetilde{L}$ is a first-order, self-adjoint elliptic operator given by
\begin{align*}
        \widetilde{L} : i \Omega^1 (Y) \oplus \Gamma (S) \oplus i \Omega^0 (Y) &\rightarrow i \Omega^1 (Y) \oplus \Gamma(S) \oplus i \Omega^0 (Y) \\ 
        (b,\psi , c) &\mapsto (* d b - dc , \slashed{D}_{B_0} \psi , - d^* b).
\end{align*}

Using the Hodge decomposition, the restriction of $\widetilde{L}$ to $i \Omega^1 (Y) \oplus i \Omega^0 (Y) = i \Image{d} \oplus i\Ker{d^*} \oplus i \Omega^0 (Y)$ can be written as a block
\begin{align*} \begin{bmatrix} 0 & 0 & -d \\ 0 & *d & 0 \\ -d^* &0 &0 \end{bmatrix}.
        \end{align*}
One can also rearrange and view the domain (and the codomain) of $\widetilde{L}$ as $\mathit{Coul}(Y) \oplus i \Image{d} \oplus i \Omega^0 (Y)$, so that $\widetilde{L} = D \oplus L_1$ (as in (\ref{eq linearmap3d})) and  $L_1$ is an operator on $i \Image{d} \oplus i \Omega^0 (Y)$ with a block form
\begin{align*} \begin{bmatrix} 0 & -d \\ -d^* &0  \end{bmatrix}.
        \end{align*}     
                
We now apply the Atiyah-Patodi-Singer boundary-value problem to the operator $\widetilde{D}$. Conequently, we have that the map 
\begin{align}
\widetilde{D} \oplus (\widetilde{\Pi}^- \circ r) : i \Omega^1 (X) \oplus \Gamma (S^+) &\rightarrow i \Omega_+^2 (X) \oplus \Gamma(S^-) \oplus i \Omega^0 (X) \oplus \widetilde{H}^-  \nonumber
\end{align}
is Fredholm, where $\widetilde{\Pi}^-$ is the projection onto the nonpositive eigenspace of $\widetilde{L}$, denoted by $\widetilde{H}^- \subset \mathit{Coul}(Y) \oplus i \Image{d} \oplus i \Omega^0 (Y)$.


Let $H_1^- \subset i \Image{d} \oplus i \Omega^0 (Y)$ be the nonpositive eigenspace of $L_1$ and $\Pi_1^-$ be its spectral projection and we see that $\widetilde{\Pi}^- = \Pi_0^- \oplus \Pi_1^-$. 
 Let $W \subset i \Omega^0 (Y)$ be the subspace of locally constant functions and let  $\Pi_{2}$ be the projection from $i \Image{d} \oplus i \Omega^0 (Y)$ onto $ i \Image{d} \oplus W^{}$ whose kernel is $\left\{ 0 \right\} \oplus \left\{ f  \, | \, \int_{Y_i} f = 0 \right\}$. We would like to apply the earlier observation (\ref{eq sumFredholm}) to the map \(\widetilde{D} \oplus ((\Pi^-_0 \oplus \Pi_2) \circ r)\) because the kernel of $d^* \oplus (\Pi_2 \circ r)$ is precisely $\mathit{Coul}^{\mathit{CC}} (X)$.
  
We start comparing \(\Image{\Pi^-_1}\) and \(\Ker{\Pi_2}\) by observing that $d d^*$ is positive on $i\Image{d}$. Consequently, the pairs $(b , d^* (d d^*)^{-1/2} b )$ and $(0,c)$  lie in $H_1^-$ for any \(b \in i\Image{d}\) and for any locally constant function $c$. Moreover, the intersection of $H^-_1$ and $\left\{ 0 \right\} \oplus \left\{ f  \, | \, \int_{Y_i} f = 0 \right\}$ is the zero set, so we can see that any element in $i \Image{d} \oplus i \Omega^0 (Y)$ can be written uniquely as the sum of elements from these two subspaces.

Consequently, the kernel of $\Pi^-_0 \oplus \Pi_{2}$ is complementary to the image of $\Pi^-_0 \oplus \Pi_1^-$. By Proposition 17.2.6 from \cite{Mono}, we can conclude that the operator 
$$\widetilde{D} \oplus ((\Pi^-_0 \oplus \Pi_{2}) \circ r) :  i \Omega^1 (X) \oplus \Gamma (S^+) \rightarrow i \Omega_+^2 (X) \oplus \Gamma(S^-) \oplus i \Omega^0 (X) \oplus (H^-_0 \oplus i \Image{d} \oplus W) $$ 
is Fredholm.  From (\ref{eq sumFredholm}), we set $D_1= \hat{D}\oplus (\Pi^-_0 \circ r )$ and $D_2 = d^* \oplus (\Pi_2 \circ r)$ and deduce that the map $\hat{D} \oplus (\Pi^-_0 \circ r )$ is Fredholm with index
\begin{align}
        \operatorname{Ind} (\hat{D} \oplus (\Pi^-_0 \circ r ) ) = \operatorname{Ind} (\widetilde{D} \oplus ((\Pi^-_0 \oplus \Pi_2) \circ r) )  + \dim{\Coker{D_2}}.
\nonumber
\end{align}

To find a formula for the index, one can observe that the operators $\widetilde{D} \oplus (\widetilde{\Pi}^- \circ r)$ and $\widetilde{D} \oplus ((\Pi^-_0 \oplus \Pi_{2}) \circ r) $ have the same index (see the proof of Proposition 17.2.6 in \cite{Mono}). From the proof of Proposition~\ref{prop 1-decomp}, one can deduce that the cokernel of $D_2$ is isomorphic to the space of constant functions on \(Y\). Hence, we obtain \(\operatorname{Ind} (\hat{D} \oplus (\Pi^-_0 \circ r ) ) = \operatorname{Ind} (\widetilde{D}  \oplus (\widetilde{\Pi}^- \circ r) ) +1 \).  

The index of $\widetilde{D} \oplus (\widetilde{\Pi}^- \circ r)$ can be computed from the index formula of the two operators $d^+ + d^*$ and $\slashed{D}^+_{A_0}$ with the spectral boundary condition. For instance, the index for $d^+ + d^*$ is given by
\begin{align}
   \operatorname{Ind}(d^+ + d^*) = -\frac{1}{2} \int_X \left( \frac{p_1 (X)}{3} + e(X)\right)  + \frac{\eta_{sign} - k_{sign}}{2}, \nonumber
  \end{align}  
where $p_1 (X)$, $e(X)$ are the Pontryagin class and the Euler class of \(X\) and $\eta_{sign}$, $k_{sign}$ are the eta invariant and the dimension of the kernel of the odd signature operator on $Y$ respectively. The kernel of the odd signature operator is the space of harmonic $0$-forms and $1$-forms of $Y$. Using the signature theorem and the Gauss-Bonnet theorem, we have
\begin{align}
  \sigma(X) + \chi(X)  = \int_X \left( \frac{p_1 (X)}{3} + e(X)\right)  - \eta_{sign}, \nonumber
  \end{align}
which gives
\begin{align}
   \operatorname{Ind}(d^+ + d^*) = - \frac{\sigma(X) + \chi(X) + b_0 (Y) + b_1(Y)}{2} . \nonumber
  \end{align}  
One can extract from the cohomology long exact sequence  of the pair $(X,Y)$ that 
\[ \sigma(X) + \chi(X) + b_0 (Y) + b_1(Y) = 2 ( b_0 (X) -  b_1(X) + b^+ (X) + b_1(Y)). \]   
Putting everything together, we have the desired quantity
\begin{align}
\operatorname{Ind} (\hat{D} \oplus (\Pi^-_0 \circ r ) ) &= 2\operatorname{Ind}_{\mathbb{C}} (\slashed{D}_{A_0}^+ ) - ( b_0 (X) -  b_1(X) + b^+ (X) + b_1(Y)) +1 \nonumber \\
&= 2\operatorname{Ind}_{\mathbb{C}} (\slashed{D}_{A_0}^+ ) +  b_1(X) - b^+ (X) - b_1(Y). \label{eq index4db}
\end{align}

Finally, there is an elliptic estimate for $\widetilde{D} \oplus (\widetilde{\Pi}^- \circ r)$ as a consequence of the Atiyah-Patodi-Singer theorem. Since $\Pi^-_0 \oplus \Pi_{2}$ is an isomorphism on the image of $\Pi^-_0 \oplus \Pi_1^-$, we also have an estimate for the operator $\widetilde{D} \oplus ((\Pi^-_0 \oplus \Pi_{2}) \circ r) $
     \begin{align*}
        \left\|  x \right\|_{L^2_{k+ 1}} \leq C \left( \left\| \widetilde{D}  x \right\|_{L^2_{k}} + \left\| ((\Pi^-_0 \oplus \Pi_{2}) \circ r) x \right\|_{L^2_{k+ 1/2}} + \left\| x \right\|_{L^2} \right).
\end{align*}      
Restricting \(x\) to $\Ker{D_2} = \mathit{Coul}^{\mathit{CC}} (X)$, we obtain the desired estimate. 
        
\end{proof}

\begin{rem} We can also replace $\Pi^-_0$ by any projection $\Pi^-$ commensurate to $\Pi^-_0$, i.e. a projection such that \(\Pi^- - \Pi^-_0\) is compact. The index will change according to the formula $\operatorname{Ind}(\hat D \oplus \Pi^- ) = \operatorname{Ind}(\hat D \oplus \Pi^-_0 ) + \operatorname{Ind}(\Pi^- \Pi^-_0)$, where $\Pi^- \Pi^-_0$ denotes the Fredholm operator   
\( \Pi^- \Pi^-_0 : \Image{ \Pi^-_0} \rightarrow \Image{\Pi^-}  \). Furthermore, the constant in the estimate (\ref{eq ellipest}) can be fixed for all such commensurate projections.
   \end{rem}


\section{Finite Dimensional Approximation for the Seiberg-Witten map with boundary}
\label{sec main}

We briefly recall the construction of finite dimensional approximation on the boundary  3-manifold \(Y\). Through out the section, we work on a general setting with no restriction on \(b_{1}(Y)\) (cf. \cite{TK1}). At the very end, we will specialize to the case \(b_1(Y)=0\) (cf. \cite{Man1}).

On the Coulomb slice \(\mathit{Coul}(Y)\), we have a Seiberg-Witten vector field \(F\) given by
\begin{align}
F(b,\psi) =  \left(  *db+ \quadr{\psi} +\frac{1}{2} *F_{B_0^t }  - d \bar{\xi}(\psi) \, , \, \slashed{D}_{B_0 } \psi +b \cdot \psi - \bar{\xi}(\psi) \psi   \right), \nonumber
\end{align}
where \(\bar{\xi}(\psi)\) is a unique function that satisfies $\Delta \bar{\xi}(\psi) = d^* \left( \quadr{\psi}  + \frac{1}{2} *F_{B_0^t} \right)$ and $\int_Y \bar{\xi}(\psi) = 0$. This vector field arises from a (nonlinear) projection of the gradient of the Chern-Simons-Dirac functional onto the Coulomb slice. Note that \(F\) can also be decomposed as a sum \(F = D + Q\) where \(D = (*d , \slashed{D}_{B_0} )\) is the linear operator from (\ref{eq linearmap3d}) and the nonlinear term $Q$ has nice compactness properties. 

When necessary, we pick a perturbation $\mathfrak{q}$ on the  3-manifold $Y$. This induces a perturbation on the cylinder \(I \times Y\). On the 4-manifold \(X\), we will also pick a perturbation $\hat{\mathfrak{p}}$ supported in the collar neighborhood of \(Y\) such that the restriction to $\left\{ 0 \right\} \times Y$ is $\mathfrak{q}$. In addition, as in \cite[\S24.1]{Mono}, we assume that $\hat{\mathfrak{p}}$ is of the form
\begin{align}
        \hat{\mathfrak{p}} = \beta \mathfrak{q} + \beta_0 \mathfrak{p}_0 , \nonumber
\end{align}
where $\beta$ is a cut-off function with value 1 near the boundary, $\beta_0$ is a bump function supported in $(-C , 0)$, and $ \mathfrak{p}_0$ is another perturbation on $Y$. We will always choose \( \mathfrak{q} , \mathfrak{p}_0 \) from the Banach space of tame perturbations (c.f. \cite[\S 11]{Mono}). For the rest of the paper, it is understood that the Seiberg-Witten map and the Chern-Simons-Dirac functional are perturbed. We will continue to write \(SW = \hat D + \hat Q\) and \(F = D\ + Q\) as we keep the linear  parts the same and we add terms from perturbation to the nonlinear parts. When the perturbations are tame, the nonlinear terms \(\hat Q\) and \(Q\) retain appropriate compactness properties.

Choose a closed and bounded subset \(\mathcal{R}\) in the \(L^{2}_{k+1/2}\) completion of \(\mathit{Coul}(Y)\) with the following property: if a trajectory \(y(t)\) satisfies
 \begin{align}
        - \frac{\partial}{\partial t} y (t) =  F (y(t)) \nonumber
\end{align}  
and lies in \(\mathcal{R}\) for all time \(t\in\mathbb{R}\), then \(y(t)\) in fact lies in the interior of \(\mathcal{R}\) for all time. This subset \(\mathcal{R}\) can be viewed as an isolating neighborhood for the Seiberg-Witten flow. A key result (cf. \cite[Proposition~11]{TK1}) for constructing the Floer spectrum is that a compact subset \(\mathcal{R} \cap W\) of a sufficiently large finite dimensional subspace \(W\) is an isolating neighborhood for a \emph{compressed flow} on \(W\) given by a projected vector field    \begin{align}
        - \frac{\partial}{\partial t} y (t) =  \pi_W F (y(t)), \nonumber
\end{align}
where \(\pi_W\) is a projection onto \(W\). When \(b_1 (Y) = 0\), a large ball \(B(2R)\) in \(\mathit{Coul}(Y)\) can be chosen for such an isolating neighborhood (cf. \cite[Proposition~3]{Man1}). For the construction of \(\mathcal{R}\), we refer to \cite{ManK} for the case \(b_1 (Y) = 1\) and to \cite[\S4.4]{TK1} for a general case. 

As a result, one can obtain Conley index \(\cindex{\mathcal{R} \cap W}\) with respect to this compressed flow (see Appendix~\ref{App con} for a background in Conley index theory). This will allow us to construct the Floer spectrum later on.

Now, we consider a map
\begin{align*}
SW \oplus (\Pi^-_{} \circ r) : \mathit{Coul}^{\mathit{CC}} (X) \rightarrow i \Omega_+^2 (X) \oplus \Gamma(S^-) \oplus  H^- ,
\end{align*}
where \( \Pi^-\) is a projection onto a semi-infinite subspace \(H^-\) commensurate to \(H^-_0\) as introduced in Proposition~\ref{prop bFred} and in the subsequent remark. For convenience, we denote  the 4-dimensional part of the codomain,  $i \Omega_+^2 (X) \oplus \Gamma(S^-)$, by $\mathcal{V}_X$.

Notice that there is a residual action on \(\mathit{Coul}^{\mathit{CC}} (X)\) by the quotient
\[ \mathcal{G} / \mathcal{G}^\bot \simeq H^1(X;\mathbb{Z}) = \mathbb{Z}^{b_1 (X)}, \]
which can be viewed as a group of harmonic maps \(\mathcal{H}^1_{\mathit{CC}}(X)\) with double Coulomb condition. By Proposition~\ref{prop 1-decomp}, there is a unique map \(u\) with \(u^{-1} du \in \Omega_{\mathit{CC}}^{1}\) for each cohomology class.   Elements of \(\mathcal{H}^1_{\mathit{CC}}(X)\) span a subspace of dimension \(b_1(X)\) in the slice \(\mathit{Coul}^{\mathit{CC}} (X) \) and we will denote by \(\mathcal{U}_X\) its orthogonal complement. This gives a decomposition \(\mathit{Coul}^{\mathit{CC}} (X) \simeq \mathcal{U}_X \oplus \mathbb{R}^{b_1(X)}\) and one may view \(\mathcal{U}_X\) as a fiber of this (trivial) bundle.
\begin{note} If \(X\) is closed or the Coulomb-Neumann condition is used, the subspace \(\mathcal{U}_X\) can be identified with $\Image{d^*} \subset \Ker{d^*}$.
\end{note}

The rest of the construction will closely follow the construction of the relative Bauer-Furuta invariant in \cite{Man1}. First, we pick a sufficiently large ball $B(R)$ of \(\mathcal{U}_X\) in \(L^{2}_{k+1}\) topology. The image of this ball under the restriction map is bounded. We can pick a bounded isolating neighborhood $\mathcal{R}$ containing this image in the slice \(\mathit{Coul}(Y)\). The choices of \(B(R)\) and $\mathcal{R}$ will depend on universal constants in Corollary~\ref{cor universalb}. 

For each positive integer $n$, let \(H^{-}_n\) be a semi-infinite subspace of $\mathit{Coul}(Y)$ such that its projection \(\Pi^-_n\) is commensurate to $\Pi^-_0$. Since $\hat{D} \oplus (\Pi^-_n \circ r)$ is Fredholm, we pick a finite-dimensional subspace $V_n \oplus W_n$ of the codomain $\mathcal{V}_X \oplus H_n^-$ such that it contains the cokernel this map. We will require that \(H^-_n\), $V_n$, and $W_n$ are sequences of increasing subspaces approaching the whole spaces. In particular, we  need that $\mathcal{R} \cap W_n$ is an isolating neighborhood of the compressed Seiberg-Witten flow on $W_n$.  

For example, as in \cite{Man1}, we may choose \(W_n\) to be the subspace containing all eigenspaces of \(D\) with respect to eigenvalues in an interval \([\lambda_n , \mu_n ]\) and \(H^-_n\) to be the semi-infinite subspace containing all eigenspaces with eigenvalues in an interval \((-\infty , \mu_n ]\) with \(-\lambda_n , \mu_n \rightarrow \infty\).   

Let $U_n$ be the preimage of $V_n \oplus W_n$ under the linear map $\hat{D}\oplus (\Pi^-_n \circ r)$. Consider a projected Seiberg-Witten map
\begin{align}
\pi_{V_n \oplus W_n} \circ ( SW \oplus (\Pi^-_n \circ r) ) : U_n \rightarrow V_n \oplus W_n \nonumber 
\end{align} and this gives a map
\begin{align*}
        B(R, U_n ) \rightarrow V_{n}\times (\mathcal{R} \cap W_n )
\end{align*}
when restricting to the ball. Let \(\epsilon_n\) be a sequence of positive numbers approaching \(0\), then we will try to show that, for \(n\) sufficiently large, this induces a  map of the form 
\begin{align*}
        B(R, U_n ) / S(R , U_n) \rightarrow V_{n} / B(\epsilon_n , V_n )^C \wedge N_n / L_n,
\end{align*}
where $B(\epsilon_n , V_n )^C$ is an element in \(V_{n}\) outside the small ball and \((N_n,L_n)\) is an index pair for \(\mathcal{R} \cap W_n\) with respect to the compressed Seiberg-Witten flow on \(W_n\). Consequently, this will give a map from a sphere to a suspension of the Conley index
\begin{align}
        S^{U_n} \rightarrow S^{V_n} \wedge \cindex{\mathcal{R} \cap W_n}. \nonumber
\end{align}
A crucial part in the construction is to assure that we can find such an index pair for which the induced map is well-defined. 

There are three main ingredients to establish such maps. First, we recall results regarding the moduli space of Seiberg-Witten solutions on a 4-manifold with boundary.  

\subsection{Boundedness of X-trajectories}
Let \(X^* := X \cup_Y ( [0,\infty) \times Y)\) be a manifold with cylindrical ends. A Seiberg-Witten solution on \(X^*\), also known as an \emph{\(X\)-trajectory}, can be viewed as a pair of a solution on \(X\) and a half-trajectory on \(Y\) with compatibility condition. In particular, there is a homeomorphism between moduli spaces (cf. \cite[Lemma~24.2.2]{Mono})
\begin{align}
\mathcal{M}(X^*) \simeq \mathcal{M}(X) \times_{\mathcal{B}( Y)} \mathcal{M}([0,\infty) \times Y).
\label{eq modulidecomp} 
\end{align}   

For an \(X\)-trajectory \(\gamma\), one can define its topological energy \(\mathcal{E}(\gamma)\) (cf. \cite[\S24]{Mono}). When \(\gamma\) is asymptotic to \(\mathfrak{a}\) on the cylindrical end, we have that
\begin{align}
\mathcal{E}(\gamma) = C_X - \CSD(\mathfrak{a}),   
\label{eq Xenergy}
\end{align}
where \(\CSD\) is the (perturbed) Chern-Simons-Dirac functional on \(Y\) and \(C_X\) is a constant which depends only on \(X\), a spin\(^c\) structure, a metric, and a perturbation. We now state the compactness result in Seiberg-Witten theory.
\begin{proposition}(\cite{Mono}, Proposition 24.6.4) For \(C>0\), the space of (broken) \(X\)-trajectories with energy \( < C\) is compact in the topology defined in \cite{Mono}. \label{prop 4dcompact}
\end{proposition}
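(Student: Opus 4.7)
The plan is to adapt the standard Floer-theoretic compactness arguments in Seiberg-Witten theory to the cylindrical-end setting, exploiting the fiber product decomposition \eqref{eq modulidecomp} and the energy identity \eqref{eq Xenergy}. First I would establish uniform a priori pointwise bounds. Using the Weitzenb\"ock formula for the perturbed Seiberg-Witten equations together with the topological energy bound \(\mathcal{E}(\gamma) < C\), one obtains a uniform \(L^{\infty}\) bound on the spinor along any such trajectory; tameness of the perturbation \(\hat{\mathfrak{p}}\) ensures that this argument survives. The curvature part of the Seiberg-Witten equation then provides uniform \(L^{2}\) control on \(F_{A^{t}}^{+}\).

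Next I would treat the compact piece and the cylindrical end separately. On the compact manifold \(X\), I would place each configuration in the double Coulomb slice of Proposition~\ref{prop 1-decomp}. The elliptic estimate of Proposition~\ref{prop bFred}, together with the \(L^{\infty}\) control on the spinor, yields uniform \(L^{2}_{k+1}\) bounds on \(X\); Rellich compactness then produces a subsequence converging in \(L^{2}_{k}\), and bootstrapping upgrades this to smooth convergence modulo \(\mathcal{G}^{\bot}\). The residual \(\mathcal{G}/\mathcal{G}^{\bot} \simeq \mathbb{Z}^{b_{1}(X)}\) ambiguity is absorbed in passing to the moduli space \(\mathcal{M}(X)\). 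On the cylindrical end \([0,\infty)\times Y\), one views the trajectory as a downward gradient flow line of the perturbed Chern-Simons-Dirac functional \(\CSD\) on the Coulomb slice \(\mathit{Coul}(Y)\); the monotonicity formula shows that the energy on \([T_{1},T_{2}]\) equals \(\CSD(\gamma(T_{1})) - \CSD(\gamma(T_{2}))\), and combined with \eqref{eq Xenergy} the total energy along the end is bounded by a constant depending only on \(C\) and the set of limit critical points.

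Finally, the finite total energy permits only finitely many ``breaks'' as \(t \to \infty\), so standard Floer compactness for \(Y\) (applied to the vector field \(F\) inside \(\mathit{Coul}(Y)\)) yields convergence, up to subsequences and gauge, of the half-trajectory to a broken trajectory terminating at a critical point of \(\CSD\). Matching the two limits at \(\{0\}\times Y\) via the fiber product \eqref{eq modulidecomp} assembles a broken \(X\)-trajectory as the limit in the topology of \cite{Mono}. The main obstacle is the non-compactness of the cylindrical end: one must rule out energy escaping to infinity and spinor bubbling. Both are ultimately consequences of the uniform \(L^{\infty}\) spinor bound together with the (generically) discrete set of critical values of \(\CSD\) for tame perturbations, but making the argument uniform requires the elliptic bootstrapping above together with an iterative rescaling-and-centering procedure on \([0,\infty)\times Y\) to localize and extract each break.
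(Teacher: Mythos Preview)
The paper does not prove this proposition at all: it is quoted verbatim as Proposition~24.6.4 of \cite{Mono} and used as a black box. There is therefore no ``paper's own proof'' to compare against; the authors simply import the compactness theorem from Kronheimer--Mrowka.

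Your sketch is a reasonable high-level outline of how such compactness results are established in Seiberg--Witten theory, but two points are worth flagging. First, invoking Proposition~\ref{prop bFred} and the double Coulomb slice here is anachronistic: those are the new contributions of \emph{this} paper, whereas the cited result in \cite{Mono} is proved with the standard Coulomb--Neumann gauge and the machinery developed there (unique continuation, tame perturbations, the blow-up, and the compactness theorems of Chapters~10, 16, and~24). If you are going to supply a proof, you should either follow \cite{Mono} or make explicit that you are giving an alternative argument using the present paper's gauge fixing---and then check that nothing becomes circular, since Proposition~\ref{prop bFred} is logically prior to and independent of Proposition~\ref{prop 4dcompact} in the paper's development, so this is in fact legitimate, just stylistically odd. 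Second, your sketch glosses over the genuinely hard parts of the argument in \cite{Mono}: the local compactness on cylinders (Theorem~10.7.1 there), the exponential decay to critical points, and the careful construction of the topology on the space of broken trajectories. These are substantial, and ``standard Floer compactness'' plus ``iterative rescaling-and-centering'' is not a proof but a pointer to where a proof lives.
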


 Next, we will show that an \(X\)-trajectory with finite energy actually has its energy bounded by a universal constant. 

\begin{lemma} There is a uniform bound for an energy of any \(X\)-trajectory with finite energy. When the spin\(^c\) structure \(\mathfrak{s}\) of \(Y\) is nontorsion, we further require that the perturbation is regular in the sense that all moduli spaces of trajectories are regular.
\label{lem finiteenergy}
\end{lemma}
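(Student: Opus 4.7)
The plan is to start from the energy identity (\ref{eq Xenergy}), $\mathcal{E}(\gamma) = C_X - \CSD(\mathfrak{a})$, and reduce the claim to a uniform lower bound on $\CSD(\mathfrak{a})$ as $\mathfrak{a}$ ranges over the asymptotic limits of finite-energy $X$-trajectories. Finite energy of the half-trajectory on $[0,\infty)\times Y$ forces (by standard monotonicity of $\CSD$ along the flow and the integrability of $|F(y(t))|^2$) convergence to a critical point of the perturbed CSD on $Y$; moreover, after the tame perturbation $\mathfrak{q}$, critical points modulo the full gauge group $\mathcal{G}(Y)$ form a finite set.

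If $\mathfrak{s}$ is torsion, $\CSD$ is $\mathcal{G}(Y)$-invariant and descends to an $\mathbb{R}$-valued function on $\mathcal{B}(Y)$. It therefore takes only finitely many values on the finite set of critical orbits, and the minimum of these values gives the desired lower bound on $\CSD(\mathfrak{a})$ and hence the uniform upper bound on $\mathcal{E}(\gamma)$. No regularity hypothesis is needed here.

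The nontorsion case is the actual content. Here $\CSD$ is only well defined modulo its period group $\Lambda = 2\pi\langle c_1(\mathfrak{s}),\, H^1(Y;\mathbb{Z})\rangle$, which under the nontorsion assumption contains an infinite cyclic subgroup generated by some $\Delta>0$. Each downstairs gauge orbit of critical points lifts to a $\mathbb{Z}$-orbit in the appropriate covering of $\mathcal{B}(Y)$ on which $\CSD$ takes values forming an arithmetic progression with step $\Delta$. Via the fibered-product description (\ref{eq modulidecomp}), an $X$-trajectory selects a definite lift $\tilde{\mathfrak{a}}$ (the 4-dimensional configuration remembers the homotopy class of the path from a reference connection on $X^*$ to the asymptotic limit), so $\mathcal{M}(X^*)$ decomposes as $\coprod_{\tilde{\mathfrak{a}}}\mathcal{M}(X^*;\tilde{\mathfrak{a}})$ and $\mathcal{E}(\gamma) = C_X - \CSD(\tilde{\mathfrak{a}})$ depends on the specific lift.

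The key step is to observe that the virtual dimension of $\mathcal{M}(X^*;\tilde{\mathfrak{a}})$, computed from the Atiyah--Patodi--Singer index of the extended operator of Proposition~\ref{prop bFred}, varies linearly in the lift with the \emph{same} sign as $\CSD$: shifting $\tilde{\mathfrak{a}}$ by the generator of the deck group that raises $\CSD$ by $+\Delta$ also raises the virtual dimension by a fixed positive integer $\delta$, namely the spectral flow of the Dirac family around the corresponding loop in $\mathcal{B}(Y)$, which is proportional to $\langle c_1(\mathfrak{s}),\cdot\rangle$. Consequently, lifts with arbitrarily large energy (equivalently $\CSD(\tilde{\mathfrak{a}})\to-\infty$) have virtual dimension tending to $-\infty$; under the regularity assumption every moduli space of negative virtual dimension is empty, so only finitely many lifts per downstairs orbit can carry an $X$-trajectory. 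Combined with the finiteness of downstairs critical orbits, this yields the uniform energy bound, and the same argument applies segment-by-segment to broken $X$-trajectories. The main obstacle is the bookkeeping in this last step: one must verify that the period of $\CSD$ and the period of the Floer grading around a loop carry the same sign and are both nonzero, which is a careful but routine consequence of the same spectral-flow/$\eta$-invariant identities that enter the index formula in Section~\ref{sec fredholm}.
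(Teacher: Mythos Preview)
Your proposal is correct and follows essentially the same approach as the paper: reduce to a lower bound on $\CSD(\mathfrak{a})$, handle the torsion case by compactness of the critical set, and in the nontorsion case compare the change in $\CSD$ under the deck action with the change in the expected dimension $\operatorname{gr}_\theta(X,[\mathfrak{a}])$, observing they are positively proportional (both equal a positive multiple of $([u]\cup c_1(\mathfrak{s}))[Y]$), so that regularity forces only finitely many lifts to carry trajectories. One small correction: in the torsion case you invoke finiteness of critical orbits, but the statement imposes no regularity hypothesis there; the paper instead uses only \emph{compactness} of the critical set in $\mathcal{B}(Y)$, which already gives the needed lower bound on the continuous function $\CSD$.
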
 
\begin{proof} First, we observe that an \(X\)-trajectory with finite energy is always asymptotic to a critical point of \(\CSD\) on the cylindrical ends.
By (\ref{eq Xenergy}), we only need to consider the value of \(\CSD\) at its critical points. When \(\mathfrak{s}\) is torsion, one could see that the statement is trivial  by compactness of the solutions to the 3-dimensional Seiberg-Witten equations modulo gauge.

When \(\mathfrak{s}\) is nontorsion, we also have to control the gauge action, which can be viewed as a homotopy class of an \(X\)-trajectory on the quotient configuration space.   When the perturbation is regular, the set of critical points modulo gauge is a finite set. The following argument will be similar to the finiteness result in monopole Floer homology.

Let \([\mathfrak{a}] \) be a class of critical points and fix an \(X\)-trajectory \([\gamma_0 ]\) asymptotic to \([\mathfrak{a}]\) with homotopy class \(\theta_0\). Suppose that \([\gamma]\) is an arbitrary element of a moduli space of \(X\)-trajectories asymptotic to \([\mathfrak{a}]\) with a homotopy class \(\theta\). When this moduli space is regular, its dimension is given by a quantity \(\operatorname{gr}_{\theta}(X,[\mathfrak{a}])\) with a relation
\begin{align}
\operatorname{gr}_{\theta}(X,[\mathfrak{a}]) = \operatorname{gr}_{\theta_0}(X,[\mathfrak{a}]) + \left( [u] \cup   c_1 (\mathfrak{s}) \right)  \left[Y\right],
\label{eq gradediff}
\end{align}  
where \([u] \in H^1 (Y; \mathbb{Z})\) corresponds to a class \(\theta \theta_0^{-1}\). 

On the other hand,  the difference of energies of  \([\gamma]\) and \([\gamma_0]\) is given by
\begin{align*}
\mathcal{E}(\gamma) - \mathcal{E}(\gamma_0) &= \CSD(\mathfrak{a}) - \CSD(u \cdot \mathfrak{a}) \\
&= -2 \pi^2 \left( [u] \cup   c_1 (\mathfrak{s}) \right)  \left[Y\right].
\end{align*}
Since the dimension \(\operatorname{gr}_{\theta}(X,[\mathfrak{a}])\) is nonnegative,  the equation (\ref{eq gradediff}) implies that
\[ \mathcal{E}(\gamma) \leq \mathcal{E}(\gamma_0) + 2 \pi^2 \operatorname{gr}_{\theta_0}(X,[\mathfrak{a}]).  \]
This finishes the proof  as there are finitely many classes of critical points. 
\end{proof}

\begin{rem} For the \(Pin(2)\)-equivariant case, we will also use equivariant perturbations as in the upcoming work of F. Lin \cite{FLin}.  In this context, one considers Morse-Bott version of Floer homology, where a critical point \(\mathfrak{a}\) is replaced by a critical submanifold \(\mathfrak{C}\). There are analogous compactness and transversality results as well as the relative grading \(\operatorname{gr}_{\theta}(X,[\mathfrak{C}])\).

The argument above can be directly applied to a nonexact perturbation which is either balanced or positively monotone (cf. \cite[\S29]{Mono}).
\end{rem}

We now deduce the uniform boundedness result for \(X\)-trajectories with finite energy\footnote{This is analogous to the finite type condition in \cite{Man1}. However, the finite energy condition implies the finite type condition (cf. \cite[\S5]{Mono}).} with respect to a particular gauge fixing.  
\begin{corollary} A pair \((x,y)\) of  a solution  \(x \in \mathcal{U}_X \) with $\mathit{SW}(x)=0 $ and a half-trajectory \(y: [0, \infty) \rightarrow \mathit{Coul}(Y)\) satisfying $- \frac{\partial}{\partial t} y (t) =  F (y(t))$ and $r(x) = y(0) $ gives rise to an \(X\)-trajectory. Moreover, there are constants \(B_{k} \) and \( C_k\) such that, for any such pair \((x,y)\) with finite energy,
we have
\begin{itemize}
\item  \(\left\|  x \right\|_{L^2_{k+ 1}} \leq B_k\).
\item For each \(t \geq 0\), there is a harmonic gauge transformation \(u_t \in H^1(Y;\mathbb{Z}) \) such that \(\left\| u_t \cdot  y(t) \right\|_{L^2_{k+ 1/2}} \leq C_k\).
\end{itemize}

\label{cor universalb}
\end{corollary}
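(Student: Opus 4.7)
The strategy is to glue the pair $(x,y)$ into a finite-energy $X$-trajectory on the cylindrical-end manifold $X^{\ast}=X\cup_{Y}([0,\infty)\times Y)$, apply the universal energy bound of Lemma~\ref{lem finiteenergy} together with the compactness of Proposition~\ref{prop 4dcompact}, and then read off the required bounds in the specific slices $\mathcal{U}_X$ and $\mathit{Coul}(Y)$.

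First, I would verify that placing $x$ on $X$ and $y$ on the cylinder $[0,\infty)\times Y$ yields a well-defined configuration on $X^{\ast}$ satisfying the Seiberg--Witten equations: on $X$ this is the hypothesis $\mathit{SW}(x)=0$; on the cylinder, the downward gradient of $\CSD$ projected onto $\mathit{Coul}(Y)$ is equivalent, modulo the $\mathcal{G}^\perp$-gauge action, to the temporal-gauge Seiberg--Witten equations (cf.\ \cite{Man1}, \cite{TK1}); and continuity at $\{0\}\times Y$ is built into the matching condition $r(x)=y(0)$. Via the identification (\ref{eq modulidecomp}), this produces an $X$-trajectory $\gamma$, which by assumption has finite energy. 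Lemma~\ref{lem finiteenergy} then gives a constant $E_0$, depending only on $X$, the spin$^{c}$ structure, the metric, and the chosen (tame, regular when $\mathfrak{s}$ is nontorsion) perturbations, such that $\mathcal{E}(\gamma)\leq E_0$, and Proposition~\ref{prop 4dcompact} asserts that the moduli space of $X$-trajectories with energy $\leq E_0$ is compact.

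To extract the first bullet I would argue by contradiction. Suppose a sequence $(x_n,y_n)$ of such pairs satisfied $\|x_n\|_{L^{2}_{k+1}}\to\infty$. Since $\mathcal{U}_X$ is a global slice on $X$ for the full gauge group---the double Coulomb condition removes $\mathcal{G}^\perp$ by Proposition~\ref{prop 1-decomp}, and $\mathcal{U}_X$ kills the residual $H^{1}(X;\mathbb{Z})$-action by its definition---the classes $[\gamma_n]$ would have no convergent subsequence in the moduli space, contradicting compactness. This yields an $L^{2}$ bound on $x_n$; I then apply the elliptic estimate (\ref{eq ellipest}) of Proposition~\ref{prop bFred} to $\hat{D}x_n=-\hat{Q}(x_n)$ with boundary term $(\Pi^-_0\circ r)x_n$, using that $\hat{Q}$ is compact in the relevant Sobolev scale, to promote this to an $L^{2}_{k+1}$ bound $B_k$. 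For the second bullet, at each $t$ the residual symmetry of $\mathit{Coul}(Y)$ is the lattice $H^{1}(Y;\mathbb{Z})$ acting by translation on the harmonic summand of the $1$-form. Compactness in the moduli-space topology of \cite{Mono} implies that the class $[y(t)]$ stays in a fixed bounded region of $\mathit{Coul}(Y)/H^{1}(Y;\mathbb{Z})$ uniformly in $t$; choosing, for each $t$, a harmonic gauge transformation $u_t$ that carries the harmonic part of $y(t)$ into a fixed fundamental domain produces the uniform $L^{2}_{k+1/2}$ bound $C_k$.

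The principal obstacle is converting the abstract compactness of Proposition~\ref{prop 4dcompact}---formulated in terms of broken trajectories modulo gauge from \cite{Mono}---into Sobolev bounds in the specific slices $\mathcal{U}_X$ and $\mathit{Coul}(Y)$. Concretely, one must rule out two kinds of degeneration: $L^{2}_{k+1}$ blow-up of $x_n$ on the $X$-part (excluded by the elliptic estimate together with the $L^{2}$ bound inherited from compactness), and drift of $y_n(t)$ to infinity along the harmonic direction on the cylindrical end (absorbed into the choice of $u_t$). A subsidiary technical point, which fortunately does not affect the statement, is that $u_t$ is chosen pointwise in $t$ and need not vary continuously, so no time-dependent gauge transformation on the cylinder needs to be constructed globally---only the bound at each time is required.
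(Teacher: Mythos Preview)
Your overall strategy coincides with the paper's: glue $(x,y)$ to an $X$-trajectory on $X^*$, invoke Lemma~\ref{lem finiteenergy} and Proposition~\ref{prop 4dcompact}, and pull the bounds back to the slices. There is, however, a genuine gap in your gluing step. The matching condition $r(x)=y(0)$ only identifies the \emph{tangential} parts at $\{0\}\times Y$; it says nothing about the normal ($dt$) component. In the cylindrical decomposition~(\ref{eq 1formcylin}), the Coulomb-projected trajectory $y$ lifts to a temporal-gauge Seiberg--Witten trajectory $\bar y$ with $\bar y(0)=y(0)$, so its $dt$-component $\gamma_2(0)$ vanishes. But the double Coulomb condition on $x$ only forces $\int_{N_i}\gamma_1(0)=0$, not $\gamma_1(0)=0$ pointwise, so the two pieces do \emph{not} fit together as stated. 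The paper repairs this by applying an additional gauge transformation $U(t)=\exp\bigl(\int_0^t f(s)\gamma_1(0)\,ds\bigr)$ on the cylinder, with $f$ compactly supported and $f'(0)=1$, $U(0)=\mathrm{id}$: this leaves $\bar y(0)$ fixed while inserting the correct $dt$-component at $t=0$. The resulting solution on $X^*$ lives in a specific ``mixed'' gauge (double Coulomb on $X$, temporal away from the boundary on the cylinder), and crucially every solution on $X^*$ can be put into this mixed gauge in a \emph{unique} way.

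This uniqueness is also what makes your bootstrap detour via the elliptic estimate~(\ref{eq ellipest}) unnecessary, and in fact your version of it is circular: to bound $\|\hat Q(x_n)\|_{L^2_k}$ you already need control of $x_n$ in $L^2_{k+1}$, and to bound the boundary term $\|(\Pi^-_0\circ r)x_n\|_{L^2_{k+1/2}}$ you need precisely the control on $y_n(0)$ you have not yet established. The paper instead observes that compactness of the moduli space in the topology of \cite{Mono} yields uniform $L^2_{k+1}$ bounds on solutions in the mixed gauge directly (since the mixed gauge is a global slice on $X^*$), and then simply notes that restriction and Coulomb projection are continuous to obtain the bounds on $(x,y)$.
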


\begin{note} The gauge transformation \(u_t\) comes from a residual gauge action on \(\mathit{Coul}(Y)\). For any \(y = (\alpha, \psi) \in \mathit{Coul}(Y) \), there is a unique gauge transformation (up to constant) \(u\) such that \(\alpha - u^{-1}du\) satisfies the period condition \(\int \beta_j \wedge (\alpha - u^{-1}du) \in [0,2\pi) \), where \(\{ \beta_j \}\) is a dual basis of \(H_1 (Y;\mathbb{Z})\). This condition was used in the proof of compactness results in \cite{Mono}. 
\end{note}

\begin{proof} For the first part  we will proceed similarly to the construction of the homeomorphism (\ref{eq modulidecomp}), that is we will glue a solution on \(X\) and a half-trajectory on \(Y\) to obtain a solution on \(X^*\). Recall that $y$ is a Coulomb projection of a Seiberg-Witten trajectory $\bar y$ with $\bar y (0) = y(0) $ (cf. \cite[\S 4.2]{TK1}). We can write a solution \(x\) near the boundary of \(X\) and $\bar y$ in cylindrical coordinates as in (\ref{eq 1formcylin}) 
\begin{align*}
x &= \left(   \alpha_1(t) + \beta_1(t) + \gamma_1(t) dt , \psi_1(t) \right), \\
\bar y (t) &= \left(   \alpha_2(t) + \beta_2(t) + \gamma_2(t) dt , \psi_2(t) \right).
\end{align*}
The hypothesis $x \in \mathcal{U}_X$ and $r(x) =y$ implies that $\alpha_1(0) = \alpha_2(0)$, $\beta_1(0) = \beta_2(0) = 0$, and $\psi_1(0) = \psi_2(0)$. 

The only thing we need to concern is that $\bar y$ is in temporal gauge, particularly $\gamma_2(0) =0$, but $\gamma_1(0)$ is not necessarily zero. Let $f: [0,\infty) \rightarrow [0,\infty)$ be a smooth compactly supported function with $f'(0) = 1$ and consider a gauge transformation $U(t) = e^{\int_0^t f(s) \gamma_1(0) ds}$ on $[0,\infty) \times Y$. We see that $U^{-1} d U = \int_0^t f(s)  d_Y (\gamma_1(0)) ds + f'(t) \gamma_1(0) dt$ and $U(0) = id$, so that $x$ and $U(t) \bar y(t)$ now agree on the boundary $\{ 0\} \times Y$. Note that the resulting solution on \(X^*\) is in a mixed gauge condition: the part on $X$ is in the double Coulomb gauge and the part on $[0,\infty) \times Y$ is in temporal gauge away from the boundary. One can always turn a  solution on $X^*$ to a solution in this mixed gauge in a unique way.  

For the second part, Proposition~\ref{prop 4dcompact} combined with Lemma~\ref{lem finiteenergy} gives universal bounds on the Sobolev norms of \(X\)-trajectories with finite energy in the above mixed gauge.  Since the restriction and Coulomb projection are continuous,  we have universal bounds for the Sobolev norms of a pair $(x,y)$ described above as desired.   
\end{proof}

\subsection{Approximated solutions}
Secondly, we will need convergence result for approximated \(X\)-trajectories. The idea is to combine finite dimensional approximation arguments for both closed 4-manifolds and closed 3-manifolds. 

For the rest of the section, we let $\left\{ \hat{\pi}_n \right\}$ be a sequence of projection onto a finite-dimensional subspace $V_n$\ of \(\mathcal{V}_X\) such that \(\hat{\pi}_n \rightarrow id\) strongly. Similarly, let \(\left\{ \Pi^-_n \right\}\) be a sequence of projection onto a semi-infinite subspace \(H^-_{n}\) of \(\mathit{Coul}(Y)\) with the following properties;
\begin{itemize}
\item \(H^-_{n}\) contains the nonpositive eigenspace \(H^-_0\) of \(D\),
\item \( \Pi^-_n\) is commensurate to \( \Pi^-_0\), the projection onto \( H^-_0\),
\item  \(\Pi^-_n \rightarrow id\) strongly.
\end{itemize}
We also let \(\left\{ \pi_n \right\}\) be a sequence of projection onto a finite-dimensional subspace \(W_{n}\) of \(H^-_{n}\) such that \(\pi_n \rightarrow id\) strongly. In addition, we require that the commutator $D \pi_n - \pi_n D \rightarrow 0$ in operator norm.

The following proof will be almost the same as that of Proposition 6 in \cite{Man1}, except that there is a slight simplification because we do not need to consider a nonlinear Coulomb projection in the argument.  
 
\begin{lemma} \label{lemma 4dhalflimit} Let $\left\{ x_n \right\}$ be a bounded sequence in the $L^2_{k+1}$ completion of \(\mathcal{U}_X\) such that $\hat{D} x_n \in  V_n$ and \((\Pi^-_n \circ r)x_n \in W_n\). Suppose that $ (\hat{D} + \hat{\pi}_n \hat{Q} ) x_n \rightarrow 0$ in $L^2_{k}$ and  there is a sequence of half-trajectories $y_n : [0,\infty) \rightarrow W_n $ uniformly bounded in the $L^2_{k + 1/2}$ completion of \(\mathit{Coul}(Y)\) such that
\begin{align*}
        - \frac{\partial}{\partial t} y_n (t) = \pi_n F y_n(t) ,
\end{align*}
together with $y_n (0) = (\Pi^-_n \circ r) x_n$. Then, after passing to a subsequence, the sequence $\left\{ x_n \right\}$ converges to a Seiberg-Witten solution $x$ in $L^2_{k+1}$ and there exists a Seiberg-Witten half-trajectory $y $ with $y(0) = r(x)$ and \(y_n(t)\) converges to \(y(t)\) in $L^2_{k+ 1/2}$ for all \(t \ge 0\).
\end{lemma}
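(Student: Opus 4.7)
The argument follows Manolescu's proof of \cite[Proposition 6]{Man1}, simplified by the fact (noted in the paper) that the restriction to the double Coulomb slice is linear. The plan is to run the 4-dimensional and 3-dimensional approximation arguments in parallel, using the coupling $y_n(0) = (\Pi^-_n \circ r) x_n$ to match them at the boundary.

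For the bulk sequence $\{x_n\}$, I would first extract a subsequence with $x_n \rightharpoonup x$ weakly in $L^2_{k+1}$ using boundedness, and by Rellich upgrade this to $x_n \to x$ strongly in $L^2_k$. Compactness of the nonlinearity $\hat{Q} : L^2_{k+1} \to L^2_k$ (from tameness of the perturbation) then gives $\hat{Q}(x_n) \to \hat{Q}(x)$ strongly in $L^2_k$ after passing to a further subsequence. Combined with $\hat{\pi}_n \to \operatorname{id}$ strongly and the hypothesis $(\hat{D} + \hat{\pi}_n \hat{Q})x_n \to 0$ in $L^2_k$, I obtain $\hat{D}x_n \to -\hat{Q}(x)$ strongly in $L^2_k$. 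On the other hand $\hat{D}x_n \rightharpoonup \hat{D}x$ weakly, so $\hat{D}x = -\hat{Q}(x)$, i.e.\ $\mathit{SW}(x) = 0$, and in particular $\hat{D}(x_n - x) \to 0$ in $L^2_k$.

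For the half-trajectories $\{y_n\}$, uniform $L^2_{k+1/2}$ boundedness combined with the equation $\partial_t y_n = -\pi_n F(y_n)$ yields a uniform bound on $\partial_t y_n$ in a weaker Sobolev norm, using boundedness of $D$, compactness of $Q$ and $\|\pi_n\| \le 1$. An Arzel\`a--Ascoli argument over compact time intervals, combined with a diagonal subsequence over a countable dense set of times and the uniform $L^2_{k+1/2}$ bound, produces a subsequence converging pointwise in $t$ to a limit $y$, with $y(t)$ lying in $L^2_{k+1/2}$ by lower semicontinuity. Strong convergence $\pi_n \to \operatorname{id}$ together with compactness of $Q$ lets me pass to the limit in the equation, so $-\partial_t y = F(y)$.

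Finally, I would apply the elliptic estimate (\ref{eq ellipest}) to $x_n - x$ to obtain strong convergence in $L^2_{k+1}$. The $L^2$-term vanishes by Rellich and the $\hat{D}$-term by the second paragraph; the delicate term is $\|\Pi^-_0 \circ r (x_n - x)\|_{L^2_{k+1/2}}$. Writing $\Pi^-_0 r x_n = \Pi^-_n r x_n + (\Pi^-_0 - \Pi^-_n) r x_n = y_n(0) + (\Pi^-_0 - \Pi^-_n) r x_n$, I would exploit the commensurability hypothesis (each $\Pi^-_n - \Pi^-_0$ is compact) together with weak convergence of $r x_n$ in $L^2_{k+1/2}$ and strong convergence of $\Pi^-_n$ to $\operatorname{id}$ on bounded sets to conclude that $\Pi^-_0 r x_n$ is Cauchy in $L^2_{k+1/2}$. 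Strong convergence $x_n \to x$ in $L^2_{k+1}$ follows, and continuity of the trace together with $\Pi^-_n \to \operatorname{id}$ strongly gives $y(0) = \lim \Pi^-_n r x_n = r x$, completing the proof. The main obstacle I anticipate is precisely this last boundary estimate: since the boundary norm appearing in (\ref{eq ellipest}) is the strongest Sobolev norm on $Y$, one cannot rely solely on Rellich compactness of the trace, and one must use the commensurability structure on the family of projections together with the finite-dimensional constraint $y_n(0) \in W_n$ to close the argument.
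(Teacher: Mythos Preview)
Your treatment of the bulk term and your identification of the obstacle are both correct: the difficulty is precisely that Arzel\`a--Ascoli on the half-trajectories only yields $y_n(0)\to y(0)$ in $L^2_{k-1/2}$, whereas the elliptic estimate~(\ref{eq ellipest}) demands $L^2_{k+1/2}$ control of $\Pi^-_0 r(x_n-x)$. However, your proposed fix via commensurability and the constraint $y_n(0)\in W_n$ does not close this gap. The decomposition $\Pi^-_0 r x_n = y_n(0) + (\Pi^-_0 - \Pi^-_n) r x_n$ is a tautology, and neither summand is under control in $L^2_{k+1/2}$: the first because Arzel\`a--Ascoli gives nothing better than $L^2_{k-1/2}$ at $t=0$, and the second because although each $\Pi^-_n-\Pi^-_0$ is compact, these operators vary with $n$ and converge only \emph{strongly} (i.e.\ pointwise) to $\mathrm{id}-\Pi^-_0$, which is not compact. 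Strong operator convergence is not uniform on bounded sets, so one cannot conclude that $(\Pi^-_0-\Pi^-_n) r x_n$ is Cauchy from weak convergence of $r x_n$ alone. The finite-dimensionality of $W_n$ is likewise of no help, since $\dim W_n\to\infty$.

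The paper supplies the missing idea: a parabolic smoothing argument using the bounded semigroup $e^{tD}\Pi^-_0$ on the nonpositive eigenspace. Applying the fundamental theorem of calculus to $e^{tD}\Pi^-_0(y_n(t)-y(t))$ on $[0,1]$ expresses $\Pi^-_0(y_n(0)-y(0))$ as $e^{D}\Pi^-_0(y_n(1)-y(1))$ minus an integral of $e^{tD}\Pi^-_0(D+\partial_t)(y_n-y)$. The endpoint term at $t=1$ vanishes because one \emph{does} have $L^2_{k+1/2}$ convergence on compact subsets of the open half-line $(0,\infty)$ (this is the parabolic regularity gain away from the initial time). The integrand is controlled by decomposing $(D+\partial_t)(y_n-y)=(D\pi_n-\pi_n D)y_n+\pi_n(Qy-Qy_n)+(1-\pi_n)Qy$, splitting the integral into $[0,\delta]$ and $[\delta,1]$, and using the commutator hypothesis, continuity of $Q$, and compactness of $Q$ respectively. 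This yields $\Pi^-_0 y_n(0)\to\Pi^-_0 y(0)$ in $L^2_{k+1/2}$; since $H^-_0\subset H^-_n$ gives $\Pi^-_0 r x_n=\Pi^-_0 y_n(0)$, the elliptic estimate then upgrades $x_n\to x$ to $L^2_{k+1}$.
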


\begin{proof} Since $\left\{ x_n \right\}$ is bounded, there is a subsequence of $x_n$ converges to $x$ weakly in the $L^2_{k+1}$ norm. After passing to this subsequence, we have strong convergence $x_n \rightarrow x$ in $L^2_{k}$ by Rellich lemma. Since a linear map preserves weak limits and $\hat{Q}$ is continuous in $L^2_k$, we also see that $(\hat{D} + \hat{Q} ) x_n $ converges to $(\hat{D} + \hat{Q} ) x $ weakly in the $L^2_k$ norm.  

On the other hand, we have 
\begin{align*}
        \left\| (\hat{D} + \hat{Q} ) x_n \right\|_{L^2_k} \leq \left\| (\hat{D} + \hat{\pi}_n \hat{Q} ) x_n \right\|_{L^2_k} + \left\| (1- \hat{\pi}_n ) \hat{Q} x_n \right\|_{L^2_k} .
\end{align*}
The first term goes to $0$ by the hypothesis while the second term also goes to $0$ because $(1- \hat{\pi}_n )$ converges to $0$ uniformly on a compact set (the image of a bounded set under  \(\hat{Q}\)). Hence, $(\hat{D} + \hat{Q} ) x $ must be equal to $0$. Moreover, we have
\begin{align}
        \left\| \hat{D}(x_n - x) \right\|_{L^2_k} \leq \left\| (\hat{D} + \hat{Q} ) x_n \right\|_{L^2_k} + \left\| \hat{Q} x - \hat{Q} x_n \right\|_{L^2_k} \rightarrow 0 .
\label{eq converg1}
\end{align}

Next, we move on to the 3-dimensional part. Similar to the proof of Proposition 3 in \cite{Man1}, there is a half-trajectory $y : [0,\infty) \rightarrow \mathit{Coul}(Y) $ such that $y_n (t) \rightarrow y(t)$ in $L^2_{k+ 1/2}$ uniformly on any compact subset of the open half-line $(0,\infty) $ but the convergence holds only in $L^2_{k- 1/2}$  on a compact subset of the closed half-line $[0,\infty)$. In addition, \(y\) is a Seiberg-Witten trajectory  
\begin{align*}
        - \frac{\partial}{\partial t} y (t) =  F y(t) .
\end{align*}

Applying the fundamental theorem of calculus to  $  e^{t D} \Pi^-_0 \gamma(t)$, we have 
\begin{align}
        e^D \Pi^-_0 \gamma(1) - \Pi^-_0 \gamma (0) &=  \int_0^1 e^{t D} \Pi^-_0 \left( D \gamma(t) + \frac{\partial}{\partial t} \gamma(t) \right) dt, \nonumber
\end{align}
where we use the fact that \(D\) and \(\Pi^-_0\) commute.
We will consider the integrand when $\gamma = y - y_n$ and use a decomposition 
\begin{align}
        (D + \frac{\partial}{\partial t} ) (y_n - y) = ( D \pi_n - \pi_n D ) y_n + \pi_n (Q y - Q y_n ) + (1 - \pi_n ) Q y .
\label{eq converg2}        
\end{align}
For the last two terms, we use the fact that $e^{ D} \Pi^-_0$ and $Q$ are bounded maps on $L^2_{k+ 1/2}$, so that, for some constant $R_0$,
\begin{align}
\left\| e^{t D} \Pi^-_0 \left( \pi_n (Q y (t) - Q y_n (t)) + (1 - \pi_n ) Q y(t) \right) \right\|_{L^2_{k+ 1/2}} \leq R_0 , \label{eq R0ofQ}
\end{align} 
uniformly on $t \in [0,1]$. Note that $e^{ D} \Pi^-_0$ is bounded because we consider the exponential of \(D\) restricted to its negative eigenspace.   

Let us fix $\delta > 0$. By continuity of $Q$, we have that $Q y_n (t) \rightarrow Q y(t)$ in $L^2_{k+ 1/2}$ uniformly on $[\delta , 1]$. Moreover, $\left\| y(t) \right\|_{L^2_{k+ 1/2}}$ is uniformly bounded on this interval. By compactness of $Q$, we can conclude that $(1 - \pi_n ) Q y(t) \rightarrow 0$ in $L^2_{k+ 1/2}$ uniformly on $[\delta , 1]$ as well. As a result, for any \(\epsilon >0\), we can find a sufficiently large integer \(N_0\) depending on a fixed \(\delta_0 < \epsilon/2R_0\) so that the integral
\begin{align*}
         \int_{\delta_0}^1 \left\| e^{t D} \Pi^-_0 \left( \pi_n (Q y (t) - Q y_n (t)) + (1 - \pi_n ) Q y(t) \right) \right\|_{L^2_{k+ 1/2}} dt < \epsilon/2
\end{align*}
when \(n > N_0\). Using (\ref{eq R0ofQ}), we add the integral on \([0,\delta_0]\) and obtain, for \(n > N_0\), 
\begin{align*}
        \int_{0}^1 \left\| e^{t D} \Pi^-_0 \left( \pi_n (Q y (t) - Q y_n (t)) + (1 - \pi_n ) Q y(t) \right) \right\|_{L^2_{k+ 1/2}} dt < \delta_0 R_0 + \epsilon/2 < \epsilon .
        \end{align*} 

For the first term on the right hand side of (\ref{eq converg2}), we use the hypothesis that the commutator $D \pi_n - \pi_n D \rightarrow 0$ as a bounded operator on $L^2_{k+ 1/2}$. Since $\{ y_n \}$ is uniformly bounded, we see that $( D \pi_n - \pi_n D ) y_n (t) \rightarrow 0$ in $L^2_{k+ 1/2}$ uniformly on $[0, \infty )$.
Putting everything together, we have
\begin{align*}
        \left\| \Pi^-_0 (y(0) - y_n (0)) \right\| &\leq \left\| e^D \Pi^-_0 (y(1) - y_n (1)) - \Pi^-_0(y(0) - y_n(0)) \right\| + \left\| e^D \Pi^-_0 (y(1) - y_n (1)) \right\| \\
        &\leq  \int_0^1  \left\| e^{t D} \Pi^-_0 (D + \frac{\partial}{\partial t} ) (y_n (t) - y(t)) \right\| dt + \left\| e^D \Pi^-_0 (y(1) - y_n (1)) \right\|  
\end{align*}
and we can conclude that \(\Pi^-_0 y_n (0) \rightarrow \Pi^-_0 y (0) \) in $L^2_{k+ 1/2}$ topology.  

Since $\Pi^-_n  r( x_n) = y_n (0) $ and \(H^-_0 \subset H^-_n\), we have \(\Pi^-_0 r (x_n) = \Pi^-_0 y_n (0)  \). On the other hand, we see that $\Pi^-_0 r (x_n)$ converges to $\Pi^-_0 r(x)$ weakly in $L^2_{k+ 1/2}$ because $x_n$ converges to $x$ weakly in the $L^2_{k+1}$ and \(\Pi^-_0 r\) is bounded linear. Thus we must have $\Pi^-_0 r (x) = \Pi^-_0 y(0)$. Together with (\ref{eq converg1}), the elliptic estimate (\ref{eq ellipest}) implies that $x_n$ converges to $x$ in $L^2_{k+1}$.

Since \(r\) is bounded linear, we also have that \(r(x_{n })\) converges to \(r(x)\) in $L^2_{k+ 1/2}$. Then, we observe that
\begin{align*}\left\Vert y_n (0) - r(x)\right\Vert = \left\Vert \Pi^-_n  r( x_n) - r(x)\right\Vert \leq \left\Vert \Pi^-_n  \left(r( x_n) - r(x)\right)\right\Vert + \left\Vert \left( 1 - \Pi^-_n   \right)r(x)\right\Vert   \end{align*}
so that \(y_n(0)\) converges to \(r(x) \) in $L^2_{k+ 1/2}$ because \(\Pi^-_n \) converges to the identity strongly.  Since \(y_n(0)\) converges to \(y(0)\) in $L^2_{k- 1/2}$, we must also have $ r (x) = y(0)$ and the convergence \(y_n(0) \rightarrow y(0)\) in $L^2_{k+ 1/2}$.

\end{proof}

\subsection{The main results}
The last ingredient is a technical lemma from Conley index theory to guarantee existence of an appropriate index pair. Recall that we want a map of the form
\begin{align*}
        B(R, U_n ) / S(R , U_n) \rightarrow V_{n} / B(\epsilon_n , V_n )^C \wedge N_n / L_n .
\end{align*}
The situation is almost the same as the set up of Proposition~\ref{prop maptocon} in Appendix~\ref{App con} except that there is a map in the first factor. Since we are collapsing everything outside of the ball in \(V_n \), we can focus only on the second factor by considering 
\begin{align}
\Pi^-_n \circ r: B(R, U_n ) \cap  (\hat{\pi}_n SW)^{-1}( B(\epsilon_n , V_n )) \rightarrow \mathcal{R} \cap W_n .
\label{eq preSW1} 
\end{align}
Note that the image of \(B(R, U_n )\) under \(\Pi^-_n \circ r\) is already in \(W_n\) by the choice of \(U_n\). Then, it is left verify to that this map  satisfies hypothesis of Proposition~\ref{prop maptocon} with $A =B(R, U_n ) \cap  (\hat{\pi}_n SW)^{-1}( B(\epsilon_n , V_n ))  $ and \(B = S(R, U_n ) \cap  (\hat{\pi}_n SW)^{-1}( B(\epsilon_n , V_n ))\).  

We now state the main result.

\begin{proposition} Let us fix a sufficiently large radius $R$ and a sufficiently large isolating neighborhood $\mathcal{R}$ depending on the radius $R$. With the above notation, for $n$ sufficiently large, we obtain a map
\begin{align}
        S^{U_n} \rightarrow S^{V_n} \wedge \cindex{\mathcal{R} \cap W_n}
        \label{eq finitemap}
\end{align}
induced from the map \(\pi_{V_n \oplus W_n} \circ ( SW \oplus (\Pi^-_n \circ r) ): B(R, U_n ) \rightarrow V_{n}\times (\mathcal{R} \cap W_n ) \).
\end{proposition}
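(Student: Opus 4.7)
The plan is to reduce the construction to Proposition~\ref{prop maptocon}. Since the codomain splits as $V_n \oplus W_n$ and we are collapsing everything outside the small ball $B(\epsilon_n, V_n)$ to the basepoint, the essential work concerns only the $W_n$-factor, namely extracting an appropriate index pair $(N_n, L_n)$ for $\mathcal{R} \cap W_n$ so that the restriction of $\Pi^-_n \circ r$ descends to a well-defined pointed map on the relevant quotients. Concretely, set
$$A_n := B(R, U_n) \cap (\hat{\pi}_n SW)^{-1}(B(\epsilon_n, V_n)), \qquad A_n^\partial := S(R, U_n) \cap (\hat{\pi}_n SW)^{-1}(B(\epsilon_n, V_n)),$$
so the map in (\ref{eq preSW1}) is $\Pi^-_n \circ r : A_n \to \mathcal{R} \cap W_n$, and the goal is to verify the hypothesis of Proposition~\ref{prop maptocon} for the pair $(A_n, A_n^\partial)$.

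First, I choose $R$ to exceed the universal constant $B_k$ from Corollary~\ref{cor universalb}, and choose the isolating neighborhood $\mathcal{R}$ to contain $r(B(R, \mathcal{U}_X))$ in its interior, using the bound $C_k$ from the same corollary and, in the nontorsion case, the harmonic-gauge twist described there. With these choices, $r$ sends $B(R, U_n)$ into the interior of $\mathcal{R}$, so the map in (\ref{eq preSW1}) is well-defined.

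The main obstacle is to show that, for $n$ sufficiently large, $(\Pi^-_n \circ r)(A_n^\partial)$ is disjoint from the maximal invariant set of the compressed Seiberg-Witten flow on $\mathcal{R} \cap W_n$. I argue by contradiction. If this failed for infinitely many $n$, one could extract a sequence $x_n \in S(R, U_n)$ with $\hat{\pi}_n SW(x_n) \to 0$ in $V_n$, together with uniformly bounded half-trajectories $y_n : [0, \infty) \to W_n$ of $-\partial_t y_n = \pi_n F(y_n)$ satisfying $y_n(0) = \Pi^-_n r(x_n)$ and $y_n(t) \in \mathcal{R}$ for all $t \geq 0$. Lemma~\ref{lemma 4dhalflimit} then yields, after passing to a subsequence, a Seiberg-Witten solution $x \in \mathcal{U}_X$ with $\|x\|_{L^2_{k+1}} = R$ together with a Seiberg-Witten half-trajectory $y$ with $y(0) = r(x)$ whose image lies in $\mathcal{R}$. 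Since $\mathcal{R}$ is bounded, the value of $\CSD$ along $y$ is bounded, so by (\ref{eq Xenergy}) the glued $X$-trajectory has finite topological energy. Corollary~\ref{cor universalb} then forces $\|x\|_{L^2_{k+1}} \leq B_k < R$, contradicting $\|x\|_{L^2_{k+1}} = R$.

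Once the hypothesis is verified, Proposition~\ref{prop maptocon} supplies an index pair $(N_n, L_n)$ for $\mathcal{R} \cap W_n$ with respect to the compressed flow, together with a continuous pointed map $A_n / A_n^\partial \to N_n / L_n$ realizing the Conley index. Smashing with the canonical collapse which sends any $x \in B(R, U_n)$ with $\hat{\pi}_n SW(x) \notin B(\epsilon_n, V_n)$ to the basepoint then produces
$$S^{U_n} \cong B(R, U_n)/S(R, U_n) \longrightarrow V_n / B(\epsilon_n, V_n)^c \;\wedge\; N_n / L_n \;\cong\; S^{V_n} \wedge \cindex{\mathcal{R} \cap W_n},$$
which is the required stable map. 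The independence-of-index-pair statement in Proposition~\ref{prop maptocon} guarantees that the resulting homotopy class is canonical.
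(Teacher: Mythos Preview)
Your argument handles only half of what is required by Lemma~\ref{relCon}. That lemma has two hypotheses: (i) forward trajectories starting in $f(A) \cap A^+(X)$ must avoid $\partial X$, and (ii) $f(B) \cap A^+(X) = \emptyset$. Your contradiction argument verifies only (ii): you take $x_n \in S(R, U_n) = A_n^\partial$, produce a limiting $X$-trajectory, and invoke $B_k < R$. This is exactly the paper's Case~1. But you never verify (i), and your remark that ``$r$ sends $B(R, U_n)$ into the interior of $\mathcal{R}$'' does not suffice: that only controls the initial point $y_n(0)$, not the entire forward orbit of the compressed flow. A point $\Pi^-_n r(x_n)$ with $x_n$ in the \emph{interior} of $B(R,U_n)$ could lie in $A^+(\mathcal{R}\cap W_n)$ yet have its forward trajectory touch $\partial \mathcal{R}$, violating (i).

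The paper's Case~2 is precisely this missing verification. One supposes there are $x_n \in A_n$ and times $t_n \geq 0$ with $y_n(t_n) \in \partial\mathcal{R} \cap W_n$, applies Lemma~\ref{lemma 4dhalflimit} to get a limiting finite-energy $X$-trajectory $(x,y)$, argues that $t_n \to t_0 < \infty$ (since $y$ is asymptotic to a critical point lying in $\operatorname{Int}(\mathcal{R})$), and then $y(t_0) \in \partial\mathcal{R}$ forces $\|u_{t_0}\cdot y(t_0)\|_{L^2_{k+1/2}} = R'$ where $\mathcal{R}$ is built from balls of radius $R' > C_k$. This contradicts Corollary~\ref{cor universalb}. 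Your choice of $\mathcal{R}$ (``contain $r(B(R,\mathcal{U}_X))$ in its interior, using the bound $C_k$'') is too vague to run this argument; you need the explicit radius condition $R' > C_k$ on $\mathcal{R}$, not just containment of the image of $r$. A minor additional point: what you call the ``maximal invariant set'' should be $A^+(\mathcal{R}\cap W_n)$, the positively invariant set; your argument in fact uses the latter, so this is only a terminological slip.
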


\begin{proof} We will prove by contradiction. After passing to a subsequence, suppose that there is a sequence of $V_n , W_n$, and \(\epsilon_n\) such that the image of the map (\ref{eq preSW1}) does not satisfy hypothesis of Lemma~\ref{relCon}. This gives a sequence  \(x_n \in B(R, U_n ) \cap  (\hat{\pi}_n SW)^{-1}( B(\epsilon_n , V_n ))\) with an image \(\Pi^-_n \circ r (x_n)\) lies in the invariant set of the compressed flow on \(\mathcal{R} \cap W_n\) in positive time direction. In other words, we have a sequence of approximated half-trajectories \(y_{n} : [0,\infty) \rightarrow \mathcal{R} \cap W_n  \) with \(- \frac{\partial}{\partial t} y_n (t) = \pi_n F y_n(t)\) and \(y_n(0) = \Pi^-_n \circ r (x_n)\). 

We now arrive at the set up to apply Lemma \ref{lemma 4dhalflimit}. As a result, the sequence  \(\{x_n\}\) converges to a 4-dimensional solution \(x\) and  \(\{y_n\}\) converges to a Seiberg-Witten half-trajectory \(y\) with \(r(x) = y(0)\). Together, we have an \(X\)-trajectory with finite energy and universal constants as in Corollary~\ref{cor universalb}. There are two cases to consider.

Case 1: \(x_n \in S(R,U_n)\). Here, we choose $R$ larger than the universal constant $B_k$. From Corollary~\ref{cor universalb}, this is a contradiction since we have an $X$-trajectory with $ \left\|  x \right\|_{L^2_{k+ 1}} = R > B_k$. 

Case 2: There exist \(t_n \geq 0\) such that \(y_n(t_n) \in \partial \mathcal{R} \cap W_n \). Here, we choose an isolating neighborhood \(\mathcal{R}\) arising from transverse cut-off of a union of balls $\mathbb{Z}^{b_1 (Y)} \cdot B(R' , \mathit{Coul}(Y))$ in the $L^2_{k+1}$ norm (cf. \cite{TK1} and \cite{ManK}) with $R'$ larger than the universal constant $C_k$. The limit $y(t)$ is asymptotic to a critical point \(\mathfrak{a}\) on the cylindrical end with $\mathfrak{a} \in \Inter{\mathcal{R}}$. This implies that \(t_{n} \rightarrow t_0 \geq 0\), so that $y(t_0) \in \partial \mathcal{R}$. This is a contradiction as \(\left\| u_{t_0} \cdot  y(t_0 ) \right\|_{L^2_{k+ 1/2}} = R' > C_k\).

\end{proof}

Let us try to keep track of choices made in the construction. The choice of $\epsilon_n$ does not matter as long as it is sufficiently small. From Proposition~\ref{prop maptocon}, the map is independent of the choice of index pairs. After passing to stable maps, the map is also independent of the choice of $V_n $.

For simplicity, we will specialize to the case when \(W_n = V^{\mu_n}_{\lambda_n}\) the sum of eigenspaces of \(D\) with respect to eigenvalues in an interval \([\lambda_n , \mu_n ]\) and \(H^-_n = V^{\mu_n}_{\infty}\) defined similarly. One can show that there is an isomorphism between Conley indices
\begin{align}
\Sigma^{-V^0_{\lambda_n}} \cindex{\mathcal{R} \cap V^{\mu_n}_{\lambda_n}} \simeq \Sigma^{-V^0_{\lambda_{n+1}}} \cindex{\mathcal{R} \cap V^{\mu_{n+1}}_{\lambda_{n+1}}}. \nonumber
\end{align}
Consequently, we can desuspend the Conley index on the right hand side of (\ref{eq finitemap}) by the corresponding negative eigenspace as above so that the resulting object, denoted by \(E(\mathcal{R})\) does not depend on the choice of \(V^{\mu_n}_{\lambda_n}\). Applying the index formula (\ref{eq index4db}) to (\ref{eq finitemap}), we obtain
\begin{align}
\mathbf{S}^{(- b^+ (X) - b_1(Y))\mathbb{R} + \operatorname{Ind}_{\mathbb{C}} (\slashed{D}_{A_0}^+ )\mathbb{C}  } \rightarrow E(\mathcal{R}),
\label{eq prefinal}
\end{align}   
where we note that \(\mathcal{U}_X\) is a subspace of \(\mathit{Coul}^{CC} (X)\) of codimension $b_1(X)$.
 
For the rest of the section, let us consider the case when $b_1 (Y) = 0$. 
\begin{proof}{(of Theorem~\ref{thm b1=0})}
The same argument in \cite{Man1} shows that different choices of sufficiently large radii \(R\) and sufficiently large isolating neighborhoods \(\mathcal{R}\) (which can be chosen to be the balls \(B(2R)\) in $\mathit{Coul}(Y)$) give maps in the same stable homotopy class. Note that, in this \(b_1(Y)=0\) case, we do not require the perturbation to be regular, so we can choose any  \( \mathfrak{q} , \mathfrak{p}_0 \) from the Banach space of tame perturbations together with any suitable bump functions \(\beta_0 , \beta\).
 Consequently, the choice of connections, metrics, and perturbations does not matter because the spaces of these choices are all contractible, except that we need to desuspend \(E(B(2R))\) again by $\operatorname{Ind}_{\mathbb{C}} (\slashed{D}_{A_0}^+ ) + \frac{\sigma(X) - c_1 (det S^+)^2}{8}$ complex dimension to obtain the Floer spectrum \(SWF(Y)\). 

Putting everything together, we obtain (\ref{eq mainb1=0}) from (\ref{eq prefinal}). Moreover, we obtain (\ref{eq mainth}) by considering a family of the above maps parametrized by the Picard torus of \(X\). 
\end{proof}


\appendix
\section{Maps to Conley indices} 
\label{App con}

In this appendix, we will briefly recall essential parts of Conley index theory. A thorough treatment can be found in \cite{Conley} and \cite{SalaCon}.

Let \(\phi\) be a flow on a finite-dimensional manifold \(M\) (or more generally, a locally compact Hausdorff topological space). Denote the flow action by \(\phi(x,t)\) or \(x \cdot t\) for \(x\in M\) and \(t \in \mathbb{R}\).

\begin{definition} Let $X$ be a subset of $M$.
\begin{enumerate}
  \item The invariant subset in positive direction is given by $A^+(X) := \left\{ x \in X | x \cdot \mathbb{R}^+ \subset X \right\}$.
  
        \item The \emph{maximal invariant subset} of $X$ is given by $\Inv{X} = \left\{ x \in X | x \cdot \mathbb{R} \subset X \right\}$. 
        
        \item A compact subset $X$ of $M$ is called an \emph{isolating neighborhood}  if $\Inv{X}$ is contained in $\Inter{X}$ the interior of $X$.
        \item A compact subset $S$ of $M$ is called an \emph{isolated invariant set} if there is an isolating neighborhood $X$ so that $\Inv{X} = S$.
\end{enumerate}

\end{definition}

Given an isolated invariant set or an isolating neighborhood, one will be able to extract some topological data called Conley index, which can be viewed as a generalization of a Morse index. Now, we introduce the important concept of an index pair.

\begin{definition} \label{def indexp1}
Let $S$ be an isolated invariant set. A pair of compact subsets $(N,L)$ is called an \emph{index pair} for $S$ if the following conditions hold
\begin{enumerate}
        \item $S \subset \Inter{\operatorname{cl}(N \backslash L )} $ and $S = \Inv{\operatorname{cl}(N \backslash L ) }$,
        \item $L$ is positively invariant relative to $N$,
i.e. the condition $x \in L$ and $x \cdot [0,t] \subset N$ implies $x \cdot [0,t] \subset L$.
        \item $L$ is an exit set for $N$, i.e. if $x \in N $ but $x \cdot [0, \infty)  \nsubseteq N$, then there exists $t>0$ such that $x \cdot [0,t] \subset N$ and $x \cdot t \in L$.
\end{enumerate}
For an isolating neighborhood $X$ with $\Inv{X} = S$, we will also call $(N,L)$ an index pair for $X$ if it is an index pair for $S$.
\end{definition}

Fundamental results in Conley index theory state that an index pair always exists and that all such pairs are homotopy equivalent. As a result, one may view the Conley index as an invariant which assigns a homotopy type of such index pairs to an isolated invariant set. However, it is also important to consider the Conley index as a collection of all index pairs and natural homotopy equivalences between them. One motivation for this is to reduce ambiguity of the choice of index pairs in various constructions.

\begin{definition} For an isolated invariant set (or an isolating neighborhood) $S$, we define its \emph{Conley index} $\cindex{S}$ as a collection of objects consisting of pointed spaces $ (N/L , [L])$ arising from an index pairs $(N,L)$ for  $S$. For a pair of two index pairs, we also have a collection of \emph{flow maps} induced from the flow. These flow maps are homotopy equivalences and are naturally homotopic to each other. 
Such a collection of spaces and maps between them is also known as a connected simple system. See \cite{Kur2} or \cite{SalaCon} for the details.

\end{definition} 

In this paper, we will also need to construct maps from spaces to Conley indices. Under certain hypothesis, a map from a space to an isolating neighborhood can give rise to a map to an index pair. We begin with a lemma shown in Appendix of \cite{Man1}.
\begin{lemma} \label{relCon} Let $X$ be an isolating neighborhood with $\Inv{X} = S$. If a pair $(A,B)$ of compact subsets of $X$ satisfies the following
\begin{enumerate}
        \item If $x \in A^+(X) \cap A$, then $[0,\infty) \cdot x \cap \partial X = \emptyset $,
        \item   $ B \cap A^+(X) = \emptyset$, 
\end{enumerate}
then there exists an index pair $(N,L)$ of $S$ such that $A \subset N \subset X$ and $B \subset L$.  

\end{lemma}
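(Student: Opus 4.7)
The plan is to adapt a standard index-pair construction for $X$ and augment it so that $A \subset N$ and $B \subset L$. I would proceed in three stages.

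First, I would extract two uniform quantities from the hypotheses via compactness. By (ii), compactness of $B$, and continuity of the flow, there exists $T > 0$ such that $x \cdot [0, T] \not\subset X$ for every $x \in B$. By (i) together with compactness of $K := A^+(X) \cap A$, the forward orbit $K \cdot [0, \infty)$ lies in $\Inter{X}$; since every forward orbit in $A^+(X)$ accumulates on $S$, one can choose $T_0 > 0$ so that $K \cdot [T_0, \infty)$ is contained in any prescribed open neighborhood of $S$.

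Next, I would start with a standard ``small'' index pair $(N_0, L_0)$ for $S$, with $N_0 \subset \Inter{X}$, chosen small enough that $B \cap N_0 = \emptyset$ and $K \cdot [T_0, \infty) \cap N_0 \subset N_0 \setminus L_0$; both are possible because $S \cap B = \emptyset$ by (ii) and $K \cdot [T_0, \infty)$ is arbitrarily close to $S$. I would then set
\begin{align*}
N \;=\; N_0 \;\cup\; (K \cdot [0, T_0]) \;\cup\; A,
\end{align*}
and take $L$ to be the compact subset of $N$ consisting of $L_0$ together with $B$ and with those points of $A$ whose forward orbit leaves $X$, appropriately closed and thickened to maintain compactness. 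Condition (i) guarantees that the augmentation $K \cdot [0, T_0]$ stays in $\Inter{X}$, so $N \subset X$; the uniform exit time $T$ for $B$ makes the placement of $B$ in $L$ compatible with positive invariance. By construction $A \subset N$ and $B \subset L$.

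Finally, I would verify the three axioms of Definition~\ref{def indexp1}. The maximal invariance $S = \Inv{\Clos{N \setminus L}}$ follows from $N \setminus L \subset X$, giving $\Inv{N \setminus L} \subset \Inv{X} = S$, together with $S \subset N_0 \setminus L_0 \subset N \setminus L$. Positive invariance of $L$ in $N$ and the exit-set property reduce to case analyses on the disjuncts making up $L$: the $L_0$-part is handled because $(N_0, L_0)$ is an index pair; the $B$-part by the uniform exit time $T$; and the $A$-part because every point of $A \setminus A^+(X)$ exits $X$ in forward time by the very definition of $A^+(X)$.

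The main obstacle is the bookkeeping needed to keep $N$ and $L$ compact and to verify axioms (2) and (3) after augmentation. Conditions (i) and (ii) are precisely what make this work: (i) keeps the forward orbit of $K$ in $\Inter{X}$, preventing spurious maximal-invariant trajectories in $\Clos{N \setminus L}$ outside $S$; (ii) provides the uniform exit time so that $B$ can be placed in $L$ consistently with positive invariance. Without (i), a point of $A$ pushed onto $\partial X$ by the flow would create a new invariant piece, and without (ii), a point of $B$ could lie on a trajectory trapped in $X$, contradicting its role as an exit set.
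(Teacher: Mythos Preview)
The paper does not give its own proof of this lemma; it is quoted from the appendix of \cite{Man1}. So there is no argument in the present paper to compare against directly.

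Your overall strategy---start from a small index pair $(N_0, L_0)$ for $S$ and enlarge it to contain $A$ and $B$---is natural, but the sketch has a genuine gap at the verification of positive invariance of $L$ in $N$.

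Concretely: you place every point of $A$ whose forward orbit leaves $X$ (that is, every $a \in A \setminus A^+(X)$) into $L$. Take such an $a$ lying very close to $K = A \cap A^+(X)$. Its forward orbit will shadow that of a nearby point of $K$ for a long time, and in particular will enter $N_0$ (since orbits from $K$ accumulate on $S \subset \Inter{N_0 \setminus L_0}$) before eventually exiting $X$. At the moment $a \cdot s$ first lands in $N_0$ it will typically be in $N_0 \setminus L_0$, hence in $N$ but not in $L$; positive invariance of $L$ in $N$ then fails. The phrase ``appropriately closed and thickened'' is exactly where this must be repaired, and as written it is hiding the whole difficulty. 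The uniform exit time $T$ for $B$ does not resolve the analogous issue either: a point $b \in B$ can likewise flow into $N_0 \setminus L_0$ before time $T$, and nothing in your $L$ catches it there.

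The standard repair---and the reason the paper afterwards works with the sets $P(Y, X) = \{y \cdot t : y \in Y,\ t \ge 0,\ y \cdot [0,t] \subset X\}$---is to build $L$ and $N$ from these positively invariant hulls rather than from finite-time flow-outs. Hypothesis (ii) gives a uniform exit time, which makes $P(B, X)$ compact and disjoint from $A^+(X)$, so it can serve as part of $L$; hypothesis (i) controls the closure of $P(A, X)$ so that, after adjoining a suitable compact neighbourhood of $S$ (or of $A^-(X)$), one obtains a compact $N \subset X$ containing $A$. Because $P(A,X)$ and $P(B,X)$ are positively invariant in $X$ by construction, axioms (ii) and (iii) of Definition~\ref{def indexp1} then follow directly, without the case analysis that breaks down in your version.
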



Now, suppose that we have a map $f : A \rightarrow X$ and a compact subset $B$ of $A$. If the pair $(f(A) , f(B))$ satisfies the hypothesis of the above lemma, there exists an index pair $(N,L)$ containing \((f(A),f(B))\) and we obtain a map  $f : A/B \rightarrow N/L $    induced from the inclusion.

It remains to show that this map is independent (up to homotopy) of the choice of index pairs so that it gives a well-defined map from $A/B$ to the Conley index \(\cindex{X}\). 

Given two index pairs $(N_1 , L_1)$ and $(N_2 , L_2)$ with $f(A) \subset N_1 \cap N_2$ and $f(B) \subset L_1 \cap L_2$, we wish to show that the diagram below commutes up to homotopy

\begin{center}
\begin{tikzcd}
A/B \arrow[hook]{r}{f_1}[swap]{}
\arrow[hook]{rd}{}[swap]{f_2}
& N_1 / L_1 \arrow{d}{F}[swap]{} \\
& N_2 / L_2
\end{tikzcd}
\end{center}
where $f_1 , f_2$ are maps induced by inclusions and $F$ is a flow map from $(N_1 , L_1)$ to $(N_2 , L_2)$. 
We point out that this is straightforward when  $(N_1 , L_1) \subset (N_2 , L_2)$ because the inclusion $N_1 / L_1 \hookrightarrow N_2 / L_2$ is homotopic to a flow map $N_1 / L_1 \stackrel{F}{\rightarrow} N_2 / L_2$ (cf. \cite[Proposition 3.1]{Kur2}).    

For a general case, we will construct a sequence of inclusions that relates $(N_1 , L_1)$ and $(N_2 , L_2)$ through index pairs which contain $(f(A),f(B))$. Since the subsets \(N_{i}\) and \(L_{i}\) are contained in $X$, we will consider a pair $(N_i \cup P(L_i , X) , P(L_i , X) )$ where $P(L_{i},X) := \left\{ y \cdot t \, | \, y \in Y \text{and } y \cdot [0,t] \subset X \right\}$ is the \emph{minimal positively invariant} set of $L_{i}$ relative to $X$.
It is not hard to see that these are index pairs. In addition, the subsets $N_i \cup P(L_i , X)$ and $P(L_i , X)$ are positively invariant relative to \(X\).  

Furthermore, we claim that the intersection $\bigcap_{i = 1,2} (N_i \cup P(L_i , X ) , P(L_i , X) )$ is also an index pair. Let us suppose that $x \in \bigcap_{i = 1,2} N_i \cup P(L_i , X ) $ and $x \cdot \mathbb{R}^+  \nsubseteq X$. By the exit set property of the pair \((N_i \cup P(L_i , X) , P(L_i , X) )\), there exists $t_i$ such that $x \cdot [0,t_i] \subset N_i \cup P(L_i , X)$ and $x \cdot t_i \in \ P(L_i , X)$ for $i = 1,2$. Without loss of generality, we may assume that $t_1 \geq t_2$. Since the subsets $N_2 \cup P(L_2 , X)$ and $P(L_2 , X)$ are positively invariant relative to $X$, we see that $x \cdot [0,t_1] \subset N_2 \cup P(L_2 , X)$ and $x \cdot t_1 \in P(L_2 , X)$ as well. This implies that \(\bigcap_{i=1,2} P(L_i , X )\) is an exit set for \(\bigcap_{i =1,2} N_i \cup P(L_i , X )\). It is straightforward to check other properties and verify that the intersection $\bigcap_{i = 1,2} (N_i \cup P(L_i , X ) , P(L_i , X) )$ is an index pair.
Note that, in general, the intersection of two index pairs needs not be an index pair. 

We now have a sequence of inclusions of index pairs containing $(f(A),f(B))$. This is shown in the diagram below (we abbreviate $P(L)$ for $P(L , X)$ in the diagram). \vspace{-0.05cm}
\begin{diagram} 
(N_1 \cup P(L_1 ) , P(L_1 ) ) & & & & (N_2 \cup P(L_2 ) , P(L_2 ) ) \\
& \luInto & & \ruInto & \\
\uInto & & \bigcap_{i = 1,2} (N_i \cup P(L_i ) , P(L_i ) ) & & \uInto \\
& & & & \\
(N_1 , L_1) & & & & (N_2, L_2)
\end{diagram}

%

From the above discussion, we can conclude
\begin{proposition} Let $X$ be an isolating neighborhood and \(B \subset A\) be compact sets. Suppose that there is a map \(f : A \rightarrow X\) such that a pair \((f(A),f(B))\) satisfies the hypothesis of Lemma~\ref{relCon}. Then, we have a well-defined map \(f : A/B \rightarrow \cindex{X}\) induced from the inclusion.
\label{prop maptocon}  
\end{proposition}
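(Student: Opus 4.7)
The plan is to start from Lemma~\ref{relCon}, which guarantees, for any pair $(A,B)$ whose image satisfies the two hypotheses, the existence of an index pair $(N,L)$ with $f(A)\subset N\subset X$ and $f(B)\subset L$. Composing $f$ with the quotient map yields an induced map $\bar f : A/B \to N/L$, and the proposal is to show that its class in $\cindex{X}$ is independent of the chosen index pair. Concretely, given two such pairs $(N_1,L_1)$ and $(N_2,L_2)$ with $f(A)\subset N_1\cap N_2$ and $f(B)\subset L_1\cap L_2$, I need to verify that $F\circ \bar f_1$ is homotopic to $\bar f_2$, where $F$ is the canonical flow map in the connected simple system of $\cindex{X}$.

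I would first handle the easy monotone situation: if $(N_1,L_1)\subset (N_2,L_2)$, then Proposition~3.1 of \cite{Kur2} says the inclusion $N_1/L_1\hookrightarrow N_2/L_2$ is homotopic to a flow map, and the desired compatibility is immediate from the inclusion of the two factorizations of $\bar f$.

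For arbitrary $(N_1,L_1),(N_2,L_2)$, the idea is to produce a zig-zag of inclusions of index pairs, all containing $(f(A),f(B))$. For each $i$ I would enlarge to $(N_i\cup P(L_i,X), P(L_i,X))$, where $P(L_i,X)$ is the minimal positively invariant set of $L_i$ relative to $X$; this pair is still an index pair for $\Inv{X}$, and both of its factors are positively invariant relative to $X$. The crucial and, I expect, main obstacle is to show that the pairwise intersection $\bigcap_{i=1,2}(N_i\cup P(L_i,X), P(L_i,X))$ is itself an index pair, because intersections of general index pairs need not have this property. The verification reduces to the exit-set axiom: if a point $x$ lies in the intersection of the large sets and $x\cdot \mathbb{R}^+ \not\subset X$, then the exit times $t_1,t_2$ coming from each enlarged pair can be compared; positive invariance relative to $X$ of the smaller-$t$ factor lets $x\cdot[0,\max(t_1,t_2)]$ stay inside both large sets while exiting through both $P(L_i,X)$, so the intersection carries a bona fide exit set. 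The other index-pair axioms are then routine.

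Once the intersection is established as an index pair, I arrange the zig-zag of inclusions
\[
(N_1,L_1)\hookrightarrow (N_1\cup P(L_1,X), P(L_1,X)) \hookleftarrow \bigcap_{i=1,2}(N_i\cup P(L_i,X), P(L_i,X)) \hookrightarrow (N_2\cup P(L_2,X), P(L_2,X)) \hookleftarrow (N_2,L_2),
\]
where every pair contains $(f(A),f(B))$ and every arrow is an inclusion of index pairs. Applying the monotone case to each arrow, each inclusion is homotopic (as a map of quotients) to a flow map, and composing along the zig-zag produces the canonical flow map $F:N_1/L_1\to N_2/L_2$. Since each segment identifies $\bar f_1$ with $\bar f_2$ after passage to the relevant larger or intersection pair, we get $F\circ \bar f_1 \simeq \bar f_2$, giving a well-defined class $f:A/B\to \cindex{X}$ as claimed.
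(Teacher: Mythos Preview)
Your proposal is correct and follows essentially the same route as the paper: reduce to the monotone case via \cite[Proposition~3.1]{Kur2}, enlarge each $(N_i,L_i)$ to $(N_i\cup P(L_i,X),P(L_i,X))$, verify that the intersection of these enlarged pairs is an index pair by the exit-time comparison argument, and assemble the resulting zig-zag of inclusions. Both the key construction and the verification of the exit-set axiom for the intersection match the paper's argument almost verbatim.
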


\bibliography{research}
\bibliographystyle{plain}


\end{document}